\documentclass[a4paper]{article}
\usepackage{hyperref,authblk,amsmath,amssymb,amsthm,color}
\usepackage{pgf,tikz}
\usepackage{mathrsfs}
\usetikzlibrary{arrows}
\usepackage{circuitikz}

\parindent=0mm
\parskip=2mm

\title{Lights Out on graphs}
\author[1]{Abraham Berman} 
\author[2]{Franziska Borer} 
\author[3]{Norbert Hungerb\"uhler}
\affil[1]{Department of Mathematics, Technion - Israel Institute of Technology, 
Haifa 32000, Israel}
\affil[2]{Institute of Mathematics, Goethe University, 60325 Frankfurt am Main, Germany}
\affil[3]{Department of Mathematics, ETH Z\"urich, 8092 Z\"urich, Switzerland}
\date{}

\newtheorem{theorem}{Theorem}
\newtheorem{proposition}[theorem]{Proposition}
\newtheorem{lemma}[theorem]{Lemma}

\theoremstyle{definition}
\newtheorem{definition}[theorem]{Definition}

\begin{document}
\maketitle
\begin{abstract}\noindent
We model the Lights Out game on general simple graphs in the framework of 
linear algebra over the field $\mathbb F_2$. Based upon a version
of the Fredholm alternative, we introduce a separating invariant
of the game, i.e., an initial state can be transformed into a final
state if and only if the invariant of both states agrees. We also investigate
certain states with particularly interesting properties. Apart from the
classical version of the game, we propose several variants, in particular
a version with more than only two states (light on, light off), where
the analysis resides on systems of linear equations over the ring 
$\mathbb Z_n$. Although it is easy to find a concrete solution 
of the Lights Out problem, we show that it is NP-hard to find
a minimal solution. We also propose electric circuit diagrams
to actually realize the Lights Out game. 
\end{abstract}
%%%%%%%%%%%%%%%%%%%%%%%%%%%%%%%%%%%%%%%%%
%%%%%%%%%%%%%%%%%%%%%%%%%%%%%%%%%%%%%%%%%
%%%%%%%%%%%%%%%%%%%%%%%%%%%%%%%%%%%%%%%%%
\section{Introduction}
The game {\em Lights Out\/} exists in several versions. The
classical edition, issued by Tiger Electronics  in 1995, consists of a $5\times 5$ 
grid of lights which are switches at
the same time. Pressing a light will toggle the state of this light and its 
adjacent neighbors between off and on. Initially a random set of lights are
on and the aim is to turn all lights off, if possible with a minimum
number of operations.

This game and some variants of it have been studied several times in the literature: 
The classical problem as described above was modeled in the
language of linear algebra in~\cite{anderson-feil}.
In~\cite{goshima-yamagishi}
the puzzle on a $2^k\times 2^k$ torus is investigated, and a criterion for the solvability 
of the $5^k\times 5^k$ torus is derived. 
In~\cite{kreh} the solvability of the Lights Out game with respect to
varying board size and with more than one  color (see Section~\ref{sec:colored} below)  
is studied by methods from algebraic number theory.
In the present paper, we consider the Lights Out game and
some variants on general graphs.

We can play the game with the rules described above on an arbitrary simple graph $G$ with vertex set
$V:=\{v_1,\ldots,v_n\}$. Each vertex is a light and at the same time a switch.
The status of the lights is modeled by a map
$V\to\mathbb F_2=\{0,1\}$, represented by a vector $(x_1,\ldots,x_n)^\top$, 
where $x_i=1$ means that the light in vertex $v_i$ is on,
and off if $x_i=0$. Let $A\in\mathbb F_2^{n\times n}$ be the
adjacency matrix of $G$. Then $N:=A+\mathbb I$, where $\mathbb I$ is
the $n\times n$ identity matrix, encodes the information about
which lights are toggled when pushing a certain button:
If $x\in\mathbb F_2^n$ represents a state of lights and we press
switch $v_j$, then the resulting state will be 
\begin{equation}\label{eq-switch}
x+Ne_j,
\end{equation} 
where $e_j$ is the $j$-th column of $\mathbb I$, and where  
the operations in~(\ref{eq-switch}) are carried out in the field $\mathbb F_2$.
Hence, pressing a sequence of switches, say $v_{j_1}, v_{j_2},\ldots,v_{j_k}$,
will lead an initial state $x\in\mathbb F_2^n$ to the resulting state
$$
x+Ne_{j_1}+Ne_{j_2}+\ldots+Ne_{j_k}=x+N(e_{j_1}+e_{j_2}+\ldots+e_{j_k}).
$$
In particular, 
\begin{itemize}
\item the order in which switches are pushed does not play
a role, and
\item  pressing a switch an even number of times is equivalent to
not touching it at all.
\end{itemize}
Hence, for an initial state $i\in \mathbb F_2^n$ of lights, 
a given final state $f\in\mathbb F_2^n$ can be reached in the Lights Out game 
if and only if there exists $a\in\mathbb F_2^n$
such that $f=i+Na$. Here, $a=(a_1,a_2,\ldots,a_n)^\top$ encodes the concrete solution: 
To get from $i$ to $f$ we push all switches $v_j$ for which $a_j=1$.
%%%%%%%%%%%%%%%%%%%%%%%%%%%%%%%%%%%%%%%%%
%%%%%%%%%%%%%%%%%%%%%%%%%%%%%%%%%%%%%%%%%
%%%%%%%%%%%%%%%%%%%%%%%%%%%%%%%%%%%%%%%%%
\section{A separating invariant for the Lights Out game on graphs}
As we have seen in the introduction, a final state  $f\in\mathbb F_2^n$ can be reached from
an initial state $i\in\mathbb F_2^n$ if and only if $i+f$ belongs to the column
space of $N$.
This observation can conveniently be expressed by introducing the following
equivalence relation:
\begin{definition}
Two elements $x,y\in\mathbb F_2^n$ are called equivalent with respect to the
matrix $N\in\mathbb F_2^{n\times n}$ (or just equivalent) if $x+y$ lies in the column space of
$N$. In this case we write $x\sim y$.
\end{definition}
Hence, an initial state $i\in\mathbb F_2^n$ can be transformed into a final
state $f\in\mathbb F_2^n$ if and only if $i\sim f$. In principal, it is now
easy to check, if $i$ and $f$ belong to the same equivalence class 
by applying Gauss elimination in $\mathbb F_2$ for the linear system
$Na=i+f$. However, if we want to see at a glance whether $i\sim f$, 
we need a handy separating invariant. In order to formulate such an invariant, 
we first define a bilinear form on $\mathbb F_2^n$:
\begin{definition}
$\langle\cdot,\cdot\rangle: \mathbb F_2^n\times \mathbb F_2^n\to \mathbb F_2,
(x,y)\mapsto\sum_{i=1}^n x_i y_i$.
\end{definition}
Two vectors $x,y\in \mathbb F_2^n$ with $\langle x,y\rangle=0$ are called orthogonal.
If $M$ is an arbitrary subset of $\mathbb F_2^n$, $M^\bot := \{ x\in\mathbb F_2^n\mid \langle x,m\rangle=0$ 
for all $m\in M\}$ is a vector space.
Note however, that the bilinear form $\langle\cdot,\cdot\rangle$ is 
not positive definite: Every $x\in\mathbb F_2^n$
with an even number of ones satisfies $\langle x,x\rangle=0$ (such vectors are called {\em isotropic}).
Nonetheless, the {\em Fredholm alternative\/} holds in the following sense:
\begin{proposition}\label{fredholm}
Let $N\in\mathbb F_2^{m\times n}$ be an $m\times n$ matrix.  
Then the linear system $Nx=b$ has a solution $x\in\mathbb F_2^n$ if and only if
$b\in\mathbb F_2^m$ is orthogonal to all solutions of the adjoint system $N^\top y=0$.
I.e., there holds
$$\operatorname{im} N=(\operatorname{ker}N^\top)^\bot.$$
\end{proposition}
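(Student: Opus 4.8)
The plan is to prove the two inclusions that together give $\operatorname{im} N=(\operatorname{ker}N^\top)^\bot$. The statement that the system $Nx=b$ is solvable iff $b$ is orthogonal to every solution of $N^\top y=0$ is just the reformulation $b\in\operatorname{im}N \iff b\in(\operatorname{ker}N^\top)^\bot$, so it suffices to establish the set equality of these two subspaces of $\mathbb F_2^m$.

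The easy inclusion is $\operatorname{im} N\subseteq(\operatorname{ker}N^\top)^\bot$. First I would take an arbitrary $b=Nx\in\operatorname{im}N$ and an arbitrary $y\in\operatorname{ker}N^\top$, and compute $\langle b,y\rangle=\langle Nx,y\rangle=\langle x,N^\top y\rangle=\langle x,0\rangle=0$, using that the bilinear form satisfies the adjunction $\langle Nx,y\rangle=\langle x,N^\top y\rangle$ over $\mathbb F_2$ (this is the standard identity $y^\top N x=(N^\top y)^\top x$, valid over any field). Since $y$ was arbitrary in $\operatorname{ker}N^\top$, this shows $b\in(\operatorname{ker}N^\top)^\bot$.

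The reverse inclusion is the part requiring real content. The cleanest route is a dimension count rather than an explicit construction. By the rank--nullity theorem applied to $N^\top\colon\mathbb F_2^m\to\mathbb F_2^n$, we have $\dim\operatorname{ker}N^\top=m-\operatorname{rank}N^\top=m-\operatorname{rank}N$. The main obstacle is the fact that over $\mathbb F_2$ the bilinear form is degenerate, so I cannot invoke an orthogonal direct sum decomposition; instead I would establish the dimension formula $\dim U+\dim U^\bot=m$ for any subspace $U\subseteq\mathbb F_2^m$ directly. This follows because $U^\bot$ is the kernel of the linear map $\mathbb F_2^m\to\mathbb F_2^{\dim U}$ sending $x$ to the tuple of pairings $\langle x,u_i\rangle$ against a basis $u_1,\dots,u_k$ of $U$; this map is surjective (its matrix has the independent rows $u_i^\top$, hence rank $k$), so $\dim U^\bot=m-k$ by rank--nullity, and crucially this argument never uses nondegeneracy. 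Applying this with $U=\operatorname{ker}N^\top$ gives $\dim(\operatorname{ker}N^\top)^\bot=m-(m-\operatorname{rank}N)=\operatorname{rank}N=\dim\operatorname{im}N$.

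Having matched dimensions, I would conclude by combining the two facts: $\operatorname{im}N\subseteq(\operatorname{ker}N^\top)^\bot$ from the first step and $\dim\operatorname{im}N=\dim(\operatorname{ker}N^\top)^\bot$ from the dimension count force the inclusion to be an equality, since a subspace contained in another of the same finite dimension must coincide with it. This sidesteps the degeneracy issue entirely: although $\langle\cdot,\cdot\rangle$ is not positive definite and isotropic vectors exist, the formula $\dim U+\dim U^\bot=m$ holds regardless, and that is all that the Fredholm alternative needs here.
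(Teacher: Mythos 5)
Your proof is correct and takes essentially the same approach as the paper's: an easy inclusion plus the dimension formula $\dim U+\dim U^\bot=m$ forces equality of the two subspaces; the paper merely packages the inclusion as $C\subseteq C^{\bot\bot}$ (with $C=\operatorname{im}N$ and $\operatorname{ker}N^\top=C^\bot$) and asserts the dimension formula, whereas you prove it via rank--nullity applied to the matrix with rows $u_i^\top$, which makes your write-up slightly more self-contained. One terminological slip worth fixing: the form on $\mathbb F_2^m$ is in fact nondegenerate (its Gram matrix is $\mathbb I$); the genuine issue is that isotropic vectors exist, so the restriction of the form to a subspace $U$ may be degenerate and $U\oplus U^\bot=\mathbb F_2^m$ can fail---but your argument never actually relies on the mislabeled claim, so the proof stands.
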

\begin{proof}
Let $C:=\operatorname{im}N$. Then $\operatorname{dim}C=\operatorname{rank} N=:r$.
Observe, that $\operatorname{ker}N^\top=C^\bot$ and that $\operatorname{dim}C^\bot=m-\operatorname{dim}C=m-r$.
Clearly, we have 
\begin{equation}\label{botbot}
C\subset C^\bot{}^\bot.
\end{equation} 
On the other hand, as above, we get for the dimension of $C^\bot{}^\bot$:
\begin{equation}\label{dimbotbot}
\operatorname{dim}C^\bot{}^\bot=m-\operatorname{dim}C^\bot=m-(m-r)=r.
\end{equation}
Thus, by~(\ref{botbot}) and~(\ref{dimbotbot}) the two vector spaces $\operatorname{im} N$ and $(\operatorname{ker}N^\top)^\bot$ agree.
\end{proof}
In the theory of linear codes, Proposition~\ref{fredholm} is formulated as $C^\bot{}^\bot = C$: a code $C$
coincides with its double-dual code.

Now we apply Proposition~\ref{fredholm} in our case to the symmetric matrix 
$N=N^\top\in\mathbb F_2^{n\times n}$ and get:
\begin{theorem}\label{thm-inv}
Let $A\in\mathbb F_2^{n\times n}$ be the adjacency matrix
of a simple graph $G$ and $N:=A+\mathbb I$ with rank $r$.
Let $v_1,\ldots, v_{n-r}$ be a basis of $\operatorname{ker}N$
and $J\in\mathbb F_2^{(n-r)\times n}$ be the matrix with rows $v_1^\top,\ldots, v_{n-r}^\top$.
Then $x\mapsto Jx$ is a separating invariant for the Lights Out game on $G$:
An initial state $i$ can be transformed into a final state $f$ if and only if
$Ji=Jf$.  In particular, all lights can be turned off if and only if $Ji=0$.
\end{theorem}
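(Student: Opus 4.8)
The plan is to reduce the claim directly to the Fredholm alternative of Proposition~\ref{fredholm}, exploiting the symmetry of $N$. Recall from the introduction that an initial state $i$ can be transformed into a final state $f$ precisely when $i+f$ lies in the column space $\operatorname{im}N$, i.e., when $i\sim f$. So the entire statement amounts to characterizing membership in $\operatorname{im}N$ in terms of the linear map $J$.

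First I would apply Proposition~\ref{fredholm} to $N$. Since $N=A+\mathbb I$ is symmetric, we have $N^\top=N$, and hence $\operatorname{ker}N^\top=\operatorname{ker}N$. The Fredholm alternative then yields $\operatorname{im}N=(\operatorname{ker}N)^\bot$. Therefore $i+f\in\operatorname{im}N$ if and only if $i+f$ is orthogonal to every vector in $\operatorname{ker}N$.

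Next I would translate this orthogonality condition into the language of $J$. Because $v_1,\ldots,v_{n-r}$ form a basis of $\operatorname{ker}N$, a vector is orthogonal to all of $\operatorname{ker}N$ if and only if it is orthogonal to each basis vector $v_k$. But $\langle v_k, i+f\rangle$ is exactly the $k$-th entry of the product $J(i+f)$, so orthogonality to the whole kernel is equivalent to $J(i+f)=0$. Over $\mathbb F_2$ this reads $Ji+Jf=0$, i.e., $Ji=Jf$. Chaining the equivalences gives $i\sim f$ if and only if $Ji=Jf$, which is precisely the separating-invariant property. The special case of turning all lights off then follows immediately by taking the final state $f=0$, for which $Jf=0$, so that the criterion becomes $Ji=0$.

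As for difficulties, there is essentially no hard step once Proposition~\ref{fredholm} is available; the argument is a chain of reformulations. The only point requiring a little care is the use of symmetry to replace $\operatorname{ker}N^\top$ by $\operatorname{ker}N$, since the Fredholm alternative is stated for general rectangular matrices and phrased via the adjoint system $N^\top y=0$. Everything else is the bookkeeping identity $\langle v_k,x\rangle=(Jx)_k$ together with the $\mathbb F_2$ observation that $Ji=Jf$ is equivalent to $J(i+f)=0$.
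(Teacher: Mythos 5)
Your proof is correct and follows exactly the paper's own route: it applies Proposition~\ref{fredholm} to the symmetric matrix $N$, translates orthogonality to $\operatorname{ker}N$ into the condition $J(i+f)=0$, and uses the $\mathbb F_2$ identity $J(i+f)=0\iff Ji=Jf$. The only difference is that you spell out the symmetry step $\operatorname{ker}N^\top=\operatorname{ker}N$ and the basis-vector bookkeeping explicitly, which the paper leaves implicit.
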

\begin{proof}
As we have seen above, $i$ can be transformed into $f$ if and only if
$i+f\in\operatorname{im} N$. According to Proposition~\ref{fredholm} this
is equivalent to the fact, that $i+f$ is orthogonal to $\operatorname{ker}N$,
which in turn means that $J(i+f)=0$. And over $\mathbb F_2$ this 
is equivalent to $Ji=Jf$.
\end{proof}
%%%%%%%%%%%%%%%%%%%%%%%%%%%%%%%%%%%%%%%%%
%%%%%%%%%%%%%%%%%%%%%%%%%%%%%%%%%%%%%%%%%
%%%%%%%%%%%%%%%%%%%%%%%%%%%%%%%%%%%%%%%%%
\section{Special states}\label{sec-special}
\subsection{Inverting a state}\label{inverting}
Let $x\in\mathbb F_2^n$ describe a state in the Light Out game.
Then we say that $\bar x:=x+(1,1,\ldots,1)^\top$ is the {\em inverse state}:
Lights which are on in $x$ are off in $\bar x$ and vice versa.
Now, we claim that in the Lights Out game every state can be inverted:
\begin{theorem}\label{inverse}
Let $G$ be an arbitrary simple graph and $x$ an initial state. Then,
$x$ can be transformed into its inverse state $\bar x$.
\end{theorem}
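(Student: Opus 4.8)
The plan is to show that inverting any state is always achievable by using the separating invariant from Theorem~\ref{thm-inv}. Recall that $x$ can be transformed into $\bar x$ if and only if $x+\bar x$ lies in the column space of $N$, equivalently (by Proposition~\ref{fredholm}) if and only if $x+\bar x$ is orthogonal to $\ker N$. Now the key observation is that $x+\bar x = (1,1,\ldots,1)^\top =: \mathbf 1$, independently of $x$. So the entire theorem reduces to a single claim: **that $\mathbf 1 \in \operatorname{im} N$**, or equivalently that $\mathbf 1 \perp \ker N$.

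**The main step** is therefore to verify $\langle \mathbf 1, v\rangle = 0$ for every $v \in \ker N$. Here I would exploit the specific structure $N = A + \mathbb I$ where $A$ is a symmetric adjacency matrix. The crucial algebraic fact over $\mathbb F_2$ is that for the diagonal of $N$: since $A$ has zero diagonal (simple graph, no loops), $N$ has all ones on its diagonal. I expect the cleanest route is to compute $\langle \mathbf 1, Nv\rangle$ in two ways, or to use that $N$ is symmetric to relate $\langle \mathbf 1, v\rangle$ to a self-pairing. Concretely, for $v \in \ker N$ we have $Nv = 0$, so $\langle v, Nv\rangle = 0$; expanding this bilinear form using symmetry of $N$, the off-diagonal contributions cancel in pairs over $\mathbb F_2$, leaving only the diagonal terms $\sum_i N_{ii} v_i^2 = \sum_i v_i^2$. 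Since $v_i \in \mathbb F_2$ satisfies $v_i^2 = v_i$, this equals $\sum_i v_i = \langle \mathbf 1, v\rangle$. Hence $\langle \mathbf 1, v\rangle = \langle v, Nv\rangle = 0$.

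**The hard part** — though it is really just a careful bookkeeping step rather than a conceptual obstacle — is justifying the cancellation of off-diagonal terms in $\langle v, Nv\rangle = \sum_{i,j} v_i N_{ij} v_j$. For $i \neq j$, the symmetry $N_{ij} = N_{ji}$ means the terms $v_i N_{ij} v_j$ and $v_j N_{ji} v_i$ are equal, and their sum vanishes in characteristic $2$. This leaves precisely the diagonal sum, which I then simplify as above. Once $\langle \mathbf 1, v\rangle = 0$ is established for all $v$ in a spanning set of $\ker N$, Theorem~\ref{thm-inv} applies directly: $J\mathbf 1 = 0$ means $Jx = J\bar x$, so $x \sim \bar x$ and the transformation exists.

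One cleanly stated lemma worth isolating is that \emph{for any symmetric matrix $N$ over $\mathbb F_2$ with all-ones diagonal, $\mathbf 1$ is orthogonal to $\ker N$}; the graph-theoretic meaning is simply that the all-on/all-off difference is always reachable. This makes the whole argument a two-line consequence of the Fredholm alternative once the orthogonality is verified.
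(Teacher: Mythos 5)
Your proposal is correct and follows essentially the same route as the paper: the paper reduces to showing $(1,\ldots,1)^\top\perp\ker N$ via Proposition~\ref{fredholm}, and its Lemma~\ref{lemma} (every kernel vector is isotropic, i.e.\ has even weight) is exactly your isolated lemma, proved by the same computation --- the paper's sum $\sum_{i,j\in\Omega}N_{ij}$ over the support $\Omega$ of a kernel vector is precisely your quadratic form $\langle v,Nv\rangle$, with off-diagonal terms cancelling by symmetry in characteristic $2$ and the all-ones diagonal yielding the parity of the support. No gaps; this matches the paper's argument in substance and detail.
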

In order to prove Theorem~\ref{inverse} we start with the following Lemma:
\begin{lemma}\label{lemma}
Let $N\in\mathbb F_2^{n\times n}$ be symmetric with all diagonal elements $N_{ii}=1$.
Then, each $x\in\operatorname{ker}N$ is isotropic.
\end{lemma}
\begin{proof}
Let $x\in\operatorname{ker}N$. We have to show, that the number
of components $x_i$ of $x$ which are equal to 1 is even. Let $\Omega=\{j\mid x_j=1\}$
and $N^{(i)}$ denote the $i$-th column of $N$. Then 
\begin{equation}\label{j}
0=\langle N^{(i)},x\rangle = \sum_{j\in\Omega}N_{ij}.
\end{equation}
Taking the sum over $i\in\Omega$ in (\ref{j}) we get, by the symmetry of $N$,
\begin{equation}\label{ij}
0=\sum_{i,j\in\Omega}N_{ij}=\sum_{i\in\Omega}N_{ii}+2\sum_{i< j\in\Omega}N_{ij}.
\end{equation}
The first term on the right hand side of~(\ref{ij}) is $\operatorname{card}(\Omega)\operatorname{mod}2$ because all 
diagonal elements of $N$ equal 1,
the second term is zero in $\mathbb F_2$. Hence $x$ is isotropic.
\end{proof}
\begin{proof}[Proof of Theorem~\ref{inverse}]
Let $A\in\mathbb F_2^{n\times n}$ be the adjacency matrix
of  $G$ and $N:=A+\mathbb I$.
We have to show that $x+\bar x=(1,1,\ldots,1)^\top\in \operatorname{im}N=(\operatorname{ker}N)^\bot$.
In fact, according to Lemma~\ref{lemma}, all elements $y\in\operatorname{ker}N$
have an even number of ones. Hence, $\langle (1,1,\ldots,1)^\top,y\rangle=0$.
\end{proof}
Let $\Theta=\{v_{j_1},\ldots,v_{j_k}\}$ be a set of buttons which, when pressed,
invert a state. Such a set will be called {\em inverting set}. 
Hence, if $x\in\mathbb F_2^n$ is a vector with ones at the
coordinates $j_1,\ldots,j_k$ and zeros at all other coordinates, there holds
$Nx=(1,\ldots,1)^\top$. Such a vector will be called {\em inverting}.

{\bf Remark.} Theorem~\ref{inverse} is optimal in the sense
that for the complete graph $K_n$, we have $\operatorname{dim}(\operatorname{im}N)=1$. 
Indeed, on $K_n$ inverting a state is the only possible operation.
%%%%%%%%%%%%%%%%%%%%%%%%%%%%%%%%%%%%%%%%%%
\subsection{Self-reproducing, self-avoiding and neutral vectors}
Is it possible, starting from all lights off, to press a certain set of buttons such that
exactly those lights are turned on? Such a {\em self-reproducing set\/} $\Theta=\{v_{j_1},
\ldots,v_{j_k}\}$ corresponds to a {\em self-reproducing vector} $x\in\mathbb F_2^n$ having ones
exactly at the coordinates $j_\ell$, $\ell=1,\ldots,k$, and zeros in all other coordinates,
such that $Nx=(A+\mathbb I)x=x$. Hence we have the following:
\begin{proposition}\label{self-reproducing}
A vector $x\in\mathbb F_2^n$ is  self-reproducing on the graph $G$
with adjacency matrix $A$ if and only if $Ax=0$.
\end{proposition}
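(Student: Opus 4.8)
The plan is to unwind the definition of a self-reproducing vector and then rewrite the defining equation using the decomposition $N=A+\mathbb I$. By definition, $x$ is self-reproducing precisely when pressing the switches indicated by $x$, starting from the all-off state, produces exactly the state $x$ again; as recorded in the sentence preceding the statement, this is the condition $Nx=(A+\mathbb I)x=x$. So the entire content of the proposition is the equivalence $Nx=x\iff Ax=0$, and the proof amounts to transforming one equation into the other.

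First I would substitute $N=A+\mathbb I$ into $Nx=x$ to obtain $Ax+x=x$. Then, working over $\mathbb F_2$, I would add $x$ to both sides to cancel the two copies of $x$ standing on the two sides, which leaves $Ax=0$. Each of these steps is an equivalence rather than a one-way implication, so the argument runs in both directions simultaneously and yields the ``if and only if'' without any extra work.

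Since every manipulation is a direct equality in $\mathbb F_2^n$, there is no genuine obstacle here. The only point worth a moment's care is that $x+x=0$ in $\mathbb F_2^n$, so that adding $x$ to both sides of $Ax+x=x$ really does eliminate it cleanly; this is exactly the feature of characteristic $2$ that makes the diagonal shift by $\mathbb I$ reversible. No appeal to the Fredholm alternative (Proposition~\ref{fredholm}) or to the structure of $\operatorname{ker}N$ is needed, and the result reduces to a single line of computation once the defining equation $Nx=x$ is in hand.
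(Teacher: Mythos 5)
Your proof is correct and follows exactly the paper's (implicit) argument: the paper records the defining equation $Nx=(A+\mathbb I)x=x$ in the sentence before the proposition and treats the cancellation of $x$ over $\mathbb F_2$ as immediate, which is precisely the one-line computation you spell out.
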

Is it possible, starting from all lights off, to press a certain set of buttons such that
exactly all other lights are turned on? Such a {\em self-avoiding set\/} $\Theta=\{v_{j_1},
\ldots,v_{j_k}\}$ corresponds to a {\em self-avoiding vector} $x\in\mathbb F_2^n$ having ones
exactly at the coordinates $j_\ell$, $\ell=1,\ldots,k$, and zeros in all other coordinates,
such that $Nx=(A+\mathbb I)x=\bar x=x+(1,\ldots,1)^\top$. Hence we have:
\begin{proposition}
A vector $x\in\mathbb F_2^n$ is  self-avoiding on the graph $G$
with adjacency matrix $A$ if and only if $Ax=(1,\ldots,1)^\top$.
\end{proposition}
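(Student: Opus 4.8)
The plan is to unwind the definition and simplify in $\mathbb F_2$, exactly mirroring the proof of Proposition~\ref{self-reproducing}. By definition, a vector $x$ with the appropriate support is self-avoiding precisely when $Nx=\bar x$, that is, when
$$
(A+\mathbb I)x = x+(1,\ldots,1)^\top.
$$
First I would expand the left-hand side using $N=A+\mathbb I$ together with $\mathbb I x=x$, which rewrites it as $Ax+x$.

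This reduces the defining condition to $Ax+x=x+(1,\ldots,1)^\top$. The key step is then to cancel the common summand $x$: over $\mathbb F_2$ one adds $x$ to both sides and invokes $x+x=0$, arriving at $Ax=(1,\ldots,1)^\top$. Since every manipulation used is an equivalence over $\mathbb F_2$, the chain is reversible, so the defining property and the condition $Ax=(1,\ldots,1)^\top$ are indeed equivalent, which proves the claim.

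I expect no genuine obstacle here: the argument is a single line of linear algebra over $\mathbb F_2$ and runs completely parallel to the self-reproducing case. The only substantive difference is that the target of the defining equation is the inverse state $\bar x$ rather than $x$ itself, which is precisely what shifts the right-hand side of the final identity from $0$ to $(1,\ldots,1)^\top$.
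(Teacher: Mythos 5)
Your proof is correct and follows exactly the paper's own (implicit) derivation: the paper obtains the proposition by writing the defining condition $Nx=(A+\mathbb I)x=\bar x=x+(1,\ldots,1)^\top$ and cancelling $x$ over $\mathbb F_2$, which is precisely your argument. Nothing is missing.
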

Next we are looking for a set of lights $\Theta=\{v_{j_1},
\ldots,v_{j_k}\}$ with the following property: If the lights in
$\Theta$ are on and all others are off, and if then 
all buttons in $\Theta$ are pushed, then again exactly the lights in $\Theta$
are supposed to be on. Such a {\em neutral set\/} $\Theta=\{v_{j_1},
\ldots,v_{j_k}\}$ corresponds to a {\em neutral vector} $x\in\mathbb F_2^n$ having ones
exactly at the coordinates $j_\ell$, $\ell=1,\ldots,k$, and zeros in all other coordinates,
such that $x=x+Nx=x+(A+\mathbb I)x$, i.e. $Nx=0$ or equivalently $Ax=x$. 
\begin{proposition}
A vector $x\in\mathbb F_2^n$ is  neutral on the graph $G$
with adjacency matrix $A$ if and only if $Ax=x$.
\end{proposition}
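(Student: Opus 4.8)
The plan is to read the claim directly off the switching mechanics established in the introduction, exactly as was done for the two preceding propositions. First I would fix the indicator vector $x\in\mathbb F_2^n$ of the set $\Theta$, i.e.\ the vector with ones at the coordinates $j_1,\ldots,j_k$ and zeros elsewhere. The key observation is that in this situation $x$ plays a double role: it is simultaneously the \emph{state} (the lights in $\Theta$ are on, all others off) and the \emph{switch vector} (we press exactly the buttons $v_{j_1},\ldots,v_{j_k}$, so the pressed-switch vector $e_{j_1}+\cdots+e_{j_k}$ is again $x$).

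Next I would apply the update rule from equation~(\ref{eq-switch}): pressing the buttons indexed by $\Theta$ sends any state $s$ to $s+N(e_{j_1}+\cdots+e_{j_k})=s+Nx$. Taking $s=x$, the state after pressing all buttons in $\Theta$ is $x+Nx$. The defining property of a neutral set is that this resulting state again equals $x$, which is precisely the equation $x+Nx=x$, i.e.\ $Nx=0$.

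Finally I would substitute $N=A+\mathbb I$ and simplify over $\mathbb F_2$: the condition $Nx=0$ reads $(A+\mathbb I)x=Ax+x=0$, and since $-1=1$ in $\mathbb F_2$ this is equivalent to $Ax=x$. Because every step is an equivalence, this establishes both directions of the "if and only if" simultaneously.

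I do not expect any genuine obstacle; the statement is a direct unpacking of the definitions, and the argument parallels that of Proposition~\ref{self-reproducing} (and of the self-avoiding case), differing only in which right-hand side appears. The one point that warrants a moment's care is the identification of the switch vector with the state vector $x$, which is what makes the neutrality condition collapse to the \emph{homogeneous} equation $Nx=0$ rather than to an inhomogeneous one.
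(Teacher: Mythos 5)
Your proof is correct and is essentially identical to the paper's own argument, which the authors carry out in the text immediately preceding the proposition: they identify the neutral condition with $x = x + Nx$, deduce $Nx = 0$, and rewrite this as $Ax = x$ over $\mathbb F_2$. Your observation that the same vector $x$ serves as both the state and the switch vector is exactly the point the paper relies on as well.
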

In other words,  pressing the buttons which correspond to a neutral vector $x$
has no effect since $x=x+0=x+Nx$.
From these considerations we conclude immediately: 
\begin{proposition}
\begin{itemize}
\item The sum of an even number of self-avoiding vectors is self-reproducing.
\item The sum of  an arbitrary number of self-reproducing vectors is self-reproducing.
\item The sum of  an arbitrary number of neutral vectors is neutral.
\item The sum of a self-avoiding vector and an arbitrary number of self-reproducing vectors is self-avoiding.
\item The sum of a neutral vector and a sum of an even number of inverting vectors is neutral.
\end{itemize}
\end{proposition}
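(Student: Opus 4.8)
The plan is to reduce all five claims to the $\mathbb F_2$-linearity of the two maps $x\mapsto Ax$ and $x\mapsto Nx$, together with the single arithmetic fact that $2=0$ in $\mathbb F_2$. First I would collect the algebraic characterizations established in the propositions just above: writing $\mathbf 1:=(1,\ldots,1)^\top$, a vector $x$ is self-reproducing iff $Ax=0$, self-avoiding iff $Ax=\mathbf 1$, neutral iff $Nx=0$ (equivalently $Ax=x$), and inverting iff $Nx=\mathbf 1$. Each property is thus of the form $Lx=c$ for a fixed linear map $L\in\{A,N\}$ and a fixed target $c\in\{0,\mathbf 1\}$, and this uniform structure is exactly what makes the closure statements transparent.

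Next I would run through the items. For a family $x_1,\ldots,x_k$ with sum $s:=\sum_{\ell=1}^{k}x_\ell$, linearity gives $Ls=\sum_{\ell}Lx_\ell$. When every summand satisfies $Lx_\ell=0$ the sum does too, which settles the second item ($L=A$) and the third item ($L=N$) directly, for any $k$. When every summand satisfies $Lx_\ell=\mathbf 1$ we instead get $Ls=k\,\mathbf 1$, which equals $0$ precisely when $k$ is even; applied with $L=A$ this yields the first item (an even number of self-avoiding vectors gives $As=0$, i.e.\ self-reproducing). The fourth item is the mixed case $Ax=\mathbf 1$ for the self-avoiding summand and $Ay_j=0$ for the self-reproducing ones, so $A\bigl(x+\sum_{j}y_j\bigr)=\mathbf 1+0=\mathbf 1$ and the sum is again self-avoiding.

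For the fifth item I would emphasize that neutral and inverting are conditions on the \emph{same} operator $N$: if $Nx=0$ and $Nz_1=\cdots=Nz_{2m}=\mathbf 1$, then $N\bigl(x+\sum_{j=1}^{2m}z_j\bigr)=0+2m\,\mathbf 1=0$, so the total is neutral. I do not expect any genuine obstacle; the argument is essentially bookkeeping in linear algebra. The one point worth stating explicitly — and the only place where care is needed — is the parity step: it is the identity $\mathbf 1+\mathbf 1=0$ in $\mathbb F_2$ that forces the even count in the first and fifth items, while the odd/even distinction is irrelevant for the zero-target items two and three.
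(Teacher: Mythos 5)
Your proof is correct and follows exactly the route the paper intends: the paper derives this proposition ``immediately'' from the preceding algebraic characterizations ($Ax=0$, $Ax=(1,\ldots,1)^\top$, $Nx=0$, $Nx=(1,\ldots,1)^\top$) together with linearity and parity in $\mathbb F_2$, which is precisely your bookkeeping argument. Nothing is missing.
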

%%%%%%%%%%%%%%%%%%%%%%%%%%%%%%%%%%%%%%%%%
%%%%%%%%%%%%%%%%%%%%%%%%%%%%%%%%%%%%%%%%%
%%%%%%%%%%%%%%%%%%%%%%%%%%%%%%%%%%%%%%%%%
\section{Variants of the game}
\subsection{The Second Neighbors Lights Out game}
Here, we change the rules of the game a little bit to create a new challenge.
Let $G$ be a simple graph with the property, that either every vertex has even
degree or zero degree, or every vertex has odd degree.
In the first case every row of the adjacency matrix $A$ is isotropic. On this graph,
we play the Lights Out game as follows: If we push a button $v_i$, the corresponding
light is inverted and also the second neighbors. However, if a vertex $v_k$ is connected with $v_i$
by an even number of paths of length 2, then $v_k$ keeps its status.
We call this game {\em the Second Neighbors Lights Out game}.
We find:
\begin{theorem}\label{variant}
Let $A\in\mathbb F_2^{n\times n}$ be the adjacency matrix
of a simple graph $G$ such that 
\begin{itemize}
\item[(a)] all vertices have even degree or zero degree, and $r$ is the rank of $\bar N:=A^2+\mathbb I$, or
\item[(b)] all vertices have odd degree, and $r$ is the rank of $\bar N:=A^2$.
\end{itemize}
Let $v_1,\ldots, v_{n-r}$ be a basis of $\operatorname{ker}\bar N$
and $J\in\mathbb F_2^{(n-r)\times n}$ be the matrix with rows $v_1^\top,\ldots, v_{n-r}^\top$.
Then $x\mapsto Jx$ is a separating invariant for the Second Neighbors Lights Out game on $G$:
An initial state $i$ can be transformed into a final state $f$ if and only if
$Ji=Jf$.
\end{theorem}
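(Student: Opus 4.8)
The plan is to reduce the statement to the framework of Theorem~\ref{thm-inv} by recognizing $\bar N$ as the correct ``switch matrix'' for the Second Neighbors game, and then repeating the Fredholm-alternative argument verbatim with $N$ replaced by $\bar N$. The only genuine content lies in verifying the former; once $\bar N$ is identified, the conclusion is immediate.

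First I would interpret the entries of $A^2$ combinatorially. Over $\mathbb F_2$, the entry $(A^2)_{ik}$ equals the number of paths of length $2$ from $v_i$ to $v_k$, reduced modulo $2$. In particular $(A^2)_{ii}$ counts the walks $v_i\to v_j\to v_i$, so $(A^2)_{ii}=\deg(v_i)\bmod 2$. This is exactly the point where the two hypotheses separate. In case~(a) every diagonal entry of $A^2$ is $0$, so adding $\mathbb I$ produces the diagonal of ones needed to invert $v_i$ itself; in case~(b) every diagonal entry of $A^2$ is already $1$, so $\bar N=A^2$ already carries the required unit diagonal. In both cases the off-diagonal entries of $\bar N$ record precisely whether $v_i$ and $v_k$ are joined by an \emph{odd} number of length-$2$ paths, and leave the others untouched.

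Next I would check that pressing button $v_i$ sends a state $x$ to $x+\bar Ne_i$: the diagonal entry $1$ inverts the light at $v_i$, while the off-diagonal entries toggle exactly those $v_k$ reachable from $v_i$ by an odd number of length-$2$ paths and preserve the status of the remaining vertices. This matches the stated rules exactly. Since $A$ is symmetric, so are $A^2$ and hence $\bar N$. At this point the Second Neighbors Lights Out game is literally an ordinary Lights Out game whose switch matrix is the symmetric matrix $\bar N$, and the reachability question becomes: a final state $f$ can be reached from $i$ if and only if $i+f\in\operatorname{im}\bar N$.

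The remaining argument is then word-for-word the proof of Theorem~\ref{thm-inv}. Applying Proposition~\ref{fredholm} to $\bar N=\bar N^\top$ gives $\operatorname{im}\bar N=(\ker\bar N)^\bot$, so reachability is equivalent to $i+f$ being orthogonal to $\ker\bar N$, i.e.\ to $J(i+f)=0$, which over $\mathbb F_2$ is the same as $Ji=Jf$. The main obstacle is therefore entirely contained in the first two steps, namely translating the verbal rules into $\bar N$ and confirming that the degree hypotheses are precisely what force the diagonal to consist of ones; the separating-invariant conclusion then follows with no further work.
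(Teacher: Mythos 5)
Your proposal is correct and takes essentially the same approach as the paper: both hinge on the observation that $(A^2)_{ii}=\deg(v_i)\bmod 2$, so the degree hypotheses are exactly what make $\bar N$ (with unit diagonal) the switch matrix of the Second Neighbors game, after which the Fredholm machinery yields the separating invariant. The only cosmetic difference is that you apply Proposition~\ref{fredholm} directly to the symmetric matrix $\bar N$, while the paper packages the identical step by viewing the game as the ordinary Lights Out game on an auxiliary simple graph $\bar G$ and citing Theorem~\ref{thm-inv}.
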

\begin{proof}
Observe, that the Second Neighbors Lights Out game is equivalent to the original Lights Out game,
but on a new simple graph $\bar G$ with adjacency matrix $\bar A:=A^2$ in case (a).
It is at this point where we need the fact, that all rows of $A$ are isotropic: This implies
that all elements on the diagonal of $A^2$ are 0, which means that indeed $\bar G$
has no loops. Let $\bar N=\bar A+\mathbb I  =A^2+\mathbb I$. Then, the claim follows from
Theorem~\ref{thm-inv} applied to $\bar G$.
Similarly in case (b) $\bar A=A^2+\mathbb I$ is the adjacency matrix of a simple
graph $\bar G$. Hence, if we put $\bar N=\bar A+\mathbb I=A^2$, then the claim follows from
Theorem~\ref{thm-inv} applied to $\bar G$ as above.
\end{proof}

Moreover, we have
\begin{theorem}
\begin{enumerate}
\item[(a)] Every state in the Second Neighbors Lights Out game on $G$ can be inverted.
\item[(b)] If all vertices of $G$ have even degree or zero degree, then the following
is true: If an initial state $i$ can be transformed into a final state $f$ in the Second
Neighbors Lights Out game on $G$, then this is also possible for the original
Lights Out game on $G$.
\end{enumerate}
\end{theorem}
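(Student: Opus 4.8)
The plan is to reduce both claims to facts already established for the ordinary Lights Out game. Recall from Theorem~\ref{variant} that the Second Neighbors game on $G$ is equivalent to the ordinary Lights Out game on a simple graph $\bar G$, whose associated matrix is $\bar N = A^2 + \mathbb{I}$ in case (a) and $\bar N = A^2$ in case (b). For part~(a) I would simply invoke Theorem~\ref{inverse} for the simple graph $\bar G$: since Theorem~\ref{inverse} guarantees that every state can be inverted in the ordinary game on \emph{any} simple graph, and since the ordinary game on $\bar G$ is exactly the Second Neighbors game on $G$, part~(a) follows at once. The only point to check is that $\bar G$ is genuinely a simple graph, but this was already verified in the proof of Theorem~\ref{variant}, where the diagonal of $\bar A$ was shown to vanish in both cases (isotropic rows in case (a), odd degrees in case (b)), so $\bar G$ has no loops.

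For part~(b) I would work with the image characterization of solvability. Transforming $i$ into $f$ in the Second Neighbors game under the even-or-zero-degree hypothesis is equivalent to $i+f \in \operatorname{im}\bar N$ with $\bar N = A^2 + \mathbb{I}$, while transforming $i$ into $f$ in the ordinary game means $i+f \in \operatorname{im} N$ with $N = A + \mathbb{I}$. Thus the statement reduces to proving the inclusion $\operatorname{im}\bar N \subseteq \operatorname{im} N$. The key observation is the factorization over $\mathbb{F}_2$:
\[
(A+\mathbb{I})^2 = A^2 + 2A + \mathbb{I} = A^2 + \mathbb{I} = \bar N,
\]
which holds because $2A = 0$ in $\mathbb{F}_2$. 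Hence $\bar N = N^2$.

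Once this identity is in hand the inclusion is immediate: for every $a \in \mathbb{F}_2^n$ we have $\bar N a = N(Na) \in \operatorname{im} N$, so $\operatorname{im}\bar N \subseteq \operatorname{im} N$. Consequently, if $i+f \in \operatorname{im}\bar N$ then $i+f \in \operatorname{im} N$, which is precisely the assertion that $i$ can be transformed into $f$ in the original Lights Out game on $G$. I expect the main (and essentially only) nontrivial point to be spotting the factorization $A^2 + \mathbb{I} = (A+\mathbb{I})^2$, which is valid exactly because we work over $\mathbb{F}_2$; after that both parts are routine, with part~(a) a direct citation of Theorem~\ref{inverse} applied to $\bar G$ and part~(b) a one-line image inclusion.
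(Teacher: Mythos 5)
Your proposal is correct and follows essentially the same route as the paper: part (a) by citing Theorem~\ref{inverse} on the auxiliary simple graph $\bar G$ from Theorem~\ref{variant}, and part (b) by observing $\bar N = A^2+\mathbb{I} = (A+\mathbb{I})^2 = N^2$ over $\mathbb{F}_2$ and concluding via $\operatorname{im}N^2 \subseteq \operatorname{im}N$. The paper states the identity $\bar N = N^2$ without spelling out the binomial expansion, but this is exactly the same key step you identified.
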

\begin{proof}
Part (a) follows from Theorem~\ref{inverse} applied to the graph $\bar G$
in the proof of Theorem~\ref{variant}. 

To prove part (b) we argue as follows: 
Let again $A$ be the adjacency matrix of $G$, and
$N:=A+\mathbb I$. If, as in the proof of Theorem~\ref{variant},  $\bar A=A^2$ denotes the adjacency matrix 
of $\bar G$, we have $\bar N:=\bar A+\mathbb I=A^2+\mathbb I=N^2$.
Recall that the Second Neighbors Lights Out game on $G$ is equivalent to
the original Lights Out game on $\bar G$. Thus,
an initial state $i$ can be transformed into 
a final state $f$ in the Second Neighbors Lights Out game on $G$ if and only if
$i+f$ belongs to the column space of $\bar N$. But $\operatorname{im}N^2
\subset\operatorname{im}N$, and hence $i$ can be transformed into $f$ 
in the original Lights Out game on $G$ if this is possible on $\bar G$.
\end{proof}
%%%%%%%%%%%%%%%%%%%%%%%%%%%%%%%%%%%%%%%%%
\subsection{The Neighborhood Lights Out game}
Here, we mix the original and the Second Neighbors Lights  Out game:
Let $G$ be a simple graph 
such that either every vertex has odd degree, or every vertex has even degree or zero degree. On this graph,
we play the game as follows: If we push a button $v_i$, the corresponding
light is inverted, its direct neighbors and also the second neighbors. 
As before, if a vertex $v_k$ is connected with $v_i$
by an even number of paths of lengths 1 or 2, then $v_k$ keeps its status.
We call this game {\em the Neighborhood Lights Out game}.
For this game we have:
\begin{theorem}\label{neigborhood-game}
Let $A\in\mathbb F_2^{n\times n}$ be the adjacency matrix
of a simple graph $G$ such that 
\begin{itemize} 
\item all vertices have odd degree, and  $r$ is the rank of $\bar N:=A+A^2$, or
\item all vertices have even degree or zero degree, and  $r$ is the rank of $\bar N:=A+A^2+\mathbb I$.
\end{itemize}
Let $v_1,\ldots, v_{n-r}$ be a basis of $\operatorname{ker}\bar N$
and $J\in\mathbb F_2^{(n-r)\times n}$ be the matrix with rows $v_1^\top,\ldots, v_{n-r}^\top$.
Then $x\mapsto Jx$ is a separating invariant for the Neighborhood Lights Out game on $G$:
An initial state $i$ can be transformed into a final state $f$ if and only if
$Ji=Jf$.
\end{theorem}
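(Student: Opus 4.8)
The plan is to reduce the Neighborhood Lights Out game to the ordinary Lights Out game on a suitably constructed auxiliary simple graph $\bar G$, exactly as in the proof of Theorem~\ref{variant}, and then to invoke Theorem~\ref{thm-inv}. First I would read off the toggle matrix of the game. Pressing $v_i$ inverts the light $v_i$ itself and every vertex $v_k$ joined to $v_i$ by an odd number of paths of length $1$ or $2$. Since $G$ is simple, for $k\neq i$ the walks of length $2$ from $v_i$ to $v_k$ are genuine paths, so the off-diagonal $(i,k)$-entry of the toggle matrix is $(A+A^2)_{ik}\bmod 2$, while its diagonal entry is $1$ (the pressed light always toggles). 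Hence the toggle matrix is the matrix whose off-diagonal part is $A+A^2$ and whose diagonal is all ones.

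Next I would check that the two displayed formulas for $\bar N$ realize exactly this toggle matrix, the correct choice depending on the degree parity. The crucial identity is $(A^2)_{ii}=\deg(v_i)$ together with $A_{ii}=0$, so that $(A+A^2)_{ii}\equiv\deg(v_i)\pmod 2$. When all degrees are odd, $A+A^2$ already has all-ones diagonal, so $\bar N:=A+A^2$ is the toggle matrix; when all degrees are even or zero, $A+A^2$ has zero diagonal and one must add $\mathbb I$, giving $\bar N:=A+A^2+\mathbb I$. This is precisely the case split in the statement.

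The main step — and the one where the hypothesis on the degrees is indispensable — is to verify that $\bar A:=\bar N+\mathbb I$ is the adjacency matrix of a genuine simple graph $\bar G$, i.e.\ that it is symmetric with zero diagonal. Symmetry is immediate since $A$, $A^2$ and $\mathbb I$ are all symmetric. For the diagonal, in the odd-degree case $\bar A=A+A^2+\mathbb I$ has diagonal entries $\deg(v_i)+1\equiv 0$, and in the even/zero-degree case $\bar A=A+A^2$ has diagonal entries $\deg(v_i)\equiv 0$; in both cases $\bar G$ is loop-free exactly because of the uniform parity of the degrees. I expect this diagonal computation to be the only real obstacle: without the uniform degree parity, $\bar A$ could carry loops and the reduction to the ordinary game would break down.

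Finally, with $\bar N=\bar A+\mathbb I$ and $\bar A$ the adjacency matrix of the simple graph $\bar G$, the Neighborhood Lights Out game on $G$ coincides with the ordinary Lights Out game on $\bar G$. Applying Theorem~\ref{thm-inv} to $\bar G$ then yields that $x\mapsto Jx$, with $J$ formed from a basis of $\operatorname{ker}\bar N$, is a separating invariant: an initial state $i$ can be transformed into a final state $f$ if and only if $Ji=Jf$. This completes the argument.
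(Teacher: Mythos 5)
Your proposal is correct and takes essentially the same route as the paper's proof: both reduce the Neighborhood Lights Out game to the ordinary game on the auxiliary graph $\bar G$ with adjacency matrix $\bar A=\bar N+\mathbb I$, use the uniform degree parity (via $(A^2)_{ii}=\deg(v_i)$, $A_{ii}=0$) to check that $\bar A$ has zero diagonal and hence $\bar G$ is simple, and then invoke Theorem~\ref{thm-inv} on $\bar G$. Your write-up is in fact slightly more explicit than the paper's in justifying why the toggle matrix has off-diagonal part $A+A^2$ and all-ones diagonal, but the argument is the same.
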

\begin{proof}
In case (a) the Neighborhood Lights Out game is equivalent to the original Lights Out game,
but on a new simple graph $\bar G$ with adjacency matrix $\bar A:=A+A^2+\mathbb I$. 
Since all vertices of $G$ have odd degree
the elements on the diagonal of $\bar A$ are 0, and hence  $\bar G$
has no loops. Let $\bar N=\bar A+\mathbb I  =A+A^2$. Then, the claim follows from
Theorem~\ref{thm-inv} applied to $\bar G$.
Similarly in case (b), $\bar A=A+A^2$ is the adjacency matrix of a simple graph $\bar G$.
Hence, with $\bar N=\bar A+\mathbb I=A+A^2+\mathbb I$, the claim follows from
Theorem~\ref{thm-inv} applied to $\bar G$.
\end{proof}
Moreover, we have
\begin{theorem}
\begin{enumerate}
\item[(a)] Every state in the Neighborhood Lights Out game on $G$ can be inverted.
\item[(b)] If every vertex of $G$ has odd degree, then the following ist true: If an initial state $i$ can be transformed into a final state $f$ in the Neighborhood 
Lights Out game on $G$, then this is also possible for the original Lights Out game on $G$.
\end{enumerate}
\end{theorem}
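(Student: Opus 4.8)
The plan is to mirror, step for step, the arguments already used for the Second Neighbors Lights Out game, invoking the equivalence—established in the proof of Theorem~\ref{neigborhood-game}—between the Neighborhood Lights Out game on $G$ and the original Lights Out game on the simple graph $\bar G$. In the odd-degree case $\bar A = A + A^2 + \mathbb I$, and in the even-or-zero-degree case $\bar A = A + A^2$; in both cases $\bar G$ is a genuine simple graph (no loops), which is exactly what the rank computation in Theorem~\ref{neigborhood-game} relied on.

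For part~(a) I would simply note that the Neighborhood game on $G$ is, by construction, the original Lights Out game on the simple graph $\bar G$. Since Theorem~\ref{inverse} guarantees that every state can be inverted in the original Lights Out game on an \emph{arbitrary} simple graph, applying it to $\bar G$ yields the claim at once. No new computation is required; this is the direct analogue of part~(a) in the Second Neighbors theorem.

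For part~(b) I would restrict to the odd-degree case, where $\bar N := \bar A + \mathbb I = A + A^2$. Writing $N = A + \mathbb I$ for the matrix of the original game, the crucial observation is the factorization $\bar N = A + A^2 = (A+\mathbb I)A = NA$ (equivalently $\bar N = AN$), which plays exactly the role that $\bar N = N^2$ played in the Second Neighbors case. From this factorization the image inclusion $\operatorname{im}\bar N = \operatorname{im}(NA) \subseteq \operatorname{im}N$ follows immediately, since every vector of the form $NAy$ lies in the column space of $N$.

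With the inclusion in hand the conclusion is direct: an initial state $i$ can be transformed into a final state $f$ in the Neighborhood game on $G$ if and only if $i + f \in \operatorname{im}\bar N$, and by the inclusion this forces $i + f \in \operatorname{im}N$, which is precisely the condition for $i$ to be transformable into $f$ in the original Lights Out game on $G$. The only point requiring any care is spotting the factorization $\bar N = NA$; once that identity is recognized there is no genuine obstacle, as the remainder is a verbatim transcription of the Second Neighbors argument with $\operatorname{im}N^2 \subseteq \operatorname{im}N$ replaced by $\operatorname{im}(NA) \subseteq \operatorname{im}N$.
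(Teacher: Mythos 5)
Your proposal is correct and follows essentially the same route as the paper: part (a) by applying Theorem~\ref{inverse} to $\bar G$, and part (b) by factoring $\bar N$ with $N$ as a left factor to get the image inclusion. In fact your factorization $\bar N = NA$ is literally the paper's $\bar N = N + N^2 = N(\mathbb I + N)$, since $\mathbb I + N = A$ over $\mathbb F_2$.
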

\begin{proof}
Part (a) follows from Theorem~\ref{inverse} applied to the graph $\bar G$
in the proof of Theorem~\ref{neigborhood-game}. 

For part (b) let again $A$ be the adjacency matrix of $G$ and $N:=A+\mathbb I$. 
Then $\bar A=A+A^2+\mathbb I$ is the adjacency matrix of $\bar G$,
and $\bar N:=\bar A+\mathbb I=A+A^2=N+N^2$.
The Neighborhood Lights Out game on $G$ is equivalent to
the original Lights Out game on $\bar G$. Thus,
an initial state $i$ can be transformed into 
a final state $f$ in the Neighborhood Lights Out game on $G$ if and only if
$i+f$ belongs to the column space of $\bar N$. But $\operatorname{im}(N+N^2)
\subset\operatorname{im}N$, and hence $i$ can be transformed into $f$ 
in the original Lights Out game on $G$ if this is possible on $\bar G$.
\end{proof}
%%%%%%%%%%%%%%%%%%%%%%%%%%%%%%%%%%%%%%%%%
\subsection{The Non-reflexive Lights Out game}\label{non-reflexive}
Suppose, your Lights Out game device does not function properly
anymore: You notice that some of the buttons $v_i$ (maybe all) invert only
their direct neighbors but no longer the light in $v_i$. This means
that on the diagonal of the original matrix $N=A+\mathbb I$, some
places turned from 1 to 0. Hence the game is described by
a new matrix $\bar N=A+\bar{\mathbb I}$, where $\bar{\mathbb I}\in\mathbb F_2^{n\times n}$
is a given diagonal matrix.

It is clear that Theorem~\ref{thm-inv} with $N$ replaced by $\bar N$ remains true for this version of the
game. However, Theorem~\ref{inverse} turns into the following:
\begin{theorem}
Let $G$ be a simple graph with adjacency matrix $A$, and 
$\bar N=A+\bar{\mathbb I}$, where $\bar{\mathbb I}\in\mathbb F_2^{n\times n}$ is diagonal.
Then, $\bar{\mathbb I}(1,1,\ldots,1)^\top$ belongs to the column space of $\bar N$. 
I.e., if  $i,f\in\mathbb F_2^n$ are such that $i+f=\bar{\mathbb I}(1,1,\ldots,1)^\top$,
then the initial state $i$ can be transformed into the final state $f$.
In particular, if all lights are turned off initially, the state $\bar{\mathbb I}(1,1,\ldots,1)^\top$
can be reached.
\end{theorem}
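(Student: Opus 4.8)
The plan is to reduce everything to a single containment, $\bar{\mathbb I}(1,1,\ldots,1)^\top\in\operatorname{im}\bar N$, and then to verify that containment via the Fredholm alternative. First I note that, exactly as in the classical game but with $N$ replaced by $\bar N$, an initial state $i$ can be transformed into $f$ if and only if $i+f\in\operatorname{im}\bar N$; the ``$I.e.$'' clause and the ``$In$ $particular$'' clause (take $i=0$) are immediate consequences once the containment is known. Since $A$ is symmetric and $\bar{\mathbb I}$ is diagonal, the matrix $\bar N=A+\bar{\mathbb I}$ is symmetric, so $\operatorname{ker}\bar N^\top=\operatorname{ker}\bar N$ and Proposition~\ref{fredholm} gives $\operatorname{im}\bar N=(\operatorname{ker}\bar N)^\bot$. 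It therefore suffices to show that $\bar{\mathbb I}(1,1,\ldots,1)^\top$ is orthogonal to every $y\in\operatorname{ker}\bar N$.

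Next I would compute this inner product directly. Writing $\Omega=\{i\mid y_i=1\}$, the $i$-th entry of $\bar{\mathbb I}(1,1,\ldots,1)^\top$ is the diagonal element $\bar{\mathbb I}_{ii}$, so that $\langle\bar{\mathbb I}(1,1,\ldots,1)^\top,y\rangle=\sum_{i\in\Omega}\bar{\mathbb I}_{ii}$. To evaluate this I would mimic the argument of Lemma~\ref{lemma}: from $\bar Ny=0$ we get, for each $i$, the relation $0=\langle\bar N^{(i)},y\rangle=\sum_{j\in\Omega}\bar N_{ij}$, and summing these over $i\in\Omega$ yields $0=\sum_{i,j\in\Omega}\bar N_{ij}=\sum_{i\in\Omega}\bar N_{ii}+2\sum_{i<j\in\Omega}\bar N_{ij}$. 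The off-diagonal double sum vanishes in $\mathbb F_2$ by symmetry of $\bar N$, leaving $0=\sum_{i\in\Omega}\bar N_{ii}$. Because $G$ is a simple graph it has no loops, so $A_{ii}=0$ and hence $\bar N_{ii}=\bar{\mathbb I}_{ii}$; substituting gives $0=\sum_{i\in\Omega}\bar{\mathbb I}_{ii}=\langle\bar{\mathbb I}(1,1,\ldots,1)^\top,y\rangle$, which is exactly the desired orthogonality.

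The main obstacle is that Lemma~\ref{lemma} cannot be applied as a black box: it presupposes $\bar N_{ii}=1$ for every $i$ and then concludes that $\Omega$ has even cardinality, whereas here the diagonal of $\bar N$ is the arbitrary $0/1$ pattern $\bar{\mathbb I}$. The key realization that makes the proof go through is that the diagonal remainder produced by the symmetry computation is no longer merely $\operatorname{card}(\Omega)\bmod 2$, but precisely the restricted sum $\sum_{i\in\Omega}\bar{\mathbb I}_{ii}$, and that this restricted sum is literally the inner product $\langle\bar{\mathbb I}(1,1,\ldots,1)^\top,y\rangle$ we must force to zero. Once this identification is made, the rest is the same pairing of off-diagonal terms as in Lemma~\ref{lemma}, so I expect no further difficulty.
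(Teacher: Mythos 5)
Your proposal is correct and follows essentially the same route as the paper: both reduce the claim via the symmetric Fredholm alternative to showing the diagonal vector $\bar{\mathbb I}(1,\ldots,1)^\top$ is orthogonal to $\ker\bar N$, and both establish that orthogonality by the same quadratic-form computation (your sum $\sum_{i,j\in\Omega}\bar N_{ij}$ over the support of $y$ is exactly the paper's $y^\top\bar N y$), with symmetry killing the off-diagonal terms and $A_{ii}=0$ identifying the surviving diagonal sum with the required inner product.
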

\begin{proof}
Let  
$d=(\bar N_{11}, \bar N_{22},\ldots,\bar N_{nn})^\top=\bar{\mathbb I}(1,\dots,1)^\top$ be the vector consisting of the diagonal entries of $\bar N$
and $x\in\ker \bar N$ be an element of the kernel of $\bar N$. Then, we have
\[ 0 = x^{\mathrm T}\bar N x = \sum_{i,j=1}^n\bar N_{ij}x_ix_j = 2\sum_{i<j}\bar N_{ij}x_ix_j+\sum_{i=1}^n\bar N_{ii}x_i^2 = \left<d,x\right>. \]
Thus, $d$ is orthogonal to all elements $x\in\ker \bar N$, which is, according to  Proposition~\ref{fredholm}
and in view of the symmetry of the matrix $\bar N$, equivalent to $d\in\operatorname{im}(\bar N)$.
\end{proof}
%%%%%%%%%%%%%%%%%%%%%%%%%%%%%%%%%%%%%%%%%
\subsection{The Colored Lights Out game}\label{sec:colored}
We consider again a simple graph $G$ with $n$ vertices and with  adjacency matrix $A$.
Instead of toggling between on and off, we may consider a cycle
\begin{center}
off $\to$ color 1 $\to$ color 2 $\to$ \ldots $\to$ color $k-1$ $\to$ off.
\end{center}
We encode the colors by their numbers, and off by $0$. Hence a state
of colored lights on $G$  corresponds to a vector $(x_1,\ldots,x_n)^\top\in\mathbb Z_k^n$, where
$x_i$ indicates the color of the light in vertex $v_i$. 
Pushing a button in vertex $v_i$ will increment
the color in $v_i$ and its neighbors by one, modulo $k$. Suppose we push
$a_i$-times the button in $v_i$, $i=1,\ldots,n$, then an initial state $i\in\mathbb Z_k^n$ of
colored lights on the graph will end up in the final state $f=i+Na$, 
where $a=(a_1,\dots,a_n)^\top$ and the operations are carried out  in $\mathbb Z_k$,
and where $N=A+\mathbb I$.
This means, we have to solve the linear system
\begin{equation}\label{smith0}
Na\equiv c\mod k
\end{equation}
where $c=f-i$. We first treat the simpler case where $k$ is squarefree.
\begin{proposition} 
If $k$ is a product of different prime numbers $p_1,p_2,\ldots,p_\ell$, then a solution of~(\ref{smith0})
can be found as follows:
\begin{enumerate}
\item Solve~(\ref{smith0}) modulo $p_i$ for $i=1,2,\ldots,\ell$ by Gauss elimination, i.e.,
find $b_i\in\mathbb Z^n$ such that $Nb_i\equiv c\mod p_i$ for $i=1,2,\ldots,\ell$.
\item
Let $k_i:=k/p_i$ and use the extended Euclidean algorithm to find $r_i,s_i\in\mathbb Z$ such that
$r_ip_i+s_i k_i=1$ for each $i$.
Then a solution of~(\ref{smith0}) is given by 
$$a=\sum_{i=1}^\ell b_is_ik_i.$$
\end{enumerate}
In particular, a solution of~(\ref{smith0}) exists if and only if $\operatorname{det}N\not\equiv 0 \mod p_i$
for all $i\in\{1,\ldots,\ell\}$.
\end{proposition}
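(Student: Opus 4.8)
The plan is to reduce everything to the Chinese Remainder Theorem (CRT). Since $k=p_1\cdots p_\ell$ is a product of distinct primes, the ring $\mathbb Z_k$ splits as $\mathbb Z_{p_1}\times\cdots\times\mathbb Z_{p_\ell}$, and correspondingly $\mathbb Z_k^n\cong\prod_{i=1}^\ell\mathbb Z_{p_i}^n$. Under this isomorphism the equation $Na\equiv c\pmod k$ decomposes into the $\ell$ independent systems $Na\equiv c\pmod{p_i}$. My first step is therefore to observe that it suffices to check the proposed $a$ modulo each prime $p_j$ separately, since a vector that is correct modulo every $p_j$ is, by CRT, correct modulo $k$.

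Next I would verify the explicit formula $a=\sum_{i=1}^\ell b_is_ik_i$ by a direct reduction. Fix an index $j$ and compute, by linearity of the matrix--vector product,
\[
Na=\sum_{i=1}^\ell (Nb_i)\,s_ik_i.
\]
For $i\neq j$ the factor $k_i=k/p_i$ is divisible by $p_j$, so all these terms vanish modulo $p_j$. For $i=j$ the defining relation $r_jp_j+s_jk_j=1$ gives $s_jk_j\equiv 1\pmod{p_j}$, while $Nb_j\equiv c\pmod{p_j}$ by the construction in step~1. Hence $Na\equiv c\pmod{p_j}$, and since $j$ was arbitrary, CRT yields $Na\equiv c\pmod k$, which is exactly the claim.

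It remains to establish the solvability criterion, and here the same CRT decomposition does the work: $Na\equiv c\pmod k$ is solvable if and only if each reduced system $Na\equiv c\pmod{p_i}$ is solvable over $\mathbb Z_{p_i}$. This is the point where the squarefree hypothesis is essential, because it is what makes every factor $\mathbb Z_{p_i}=\mathbb F_{p_i}$ a \emph{field} rather than a mere quotient ring; over a field the linear map $x\mapsto Nx$ is surjective (so that step~1 can be carried out for every admissible right-hand side) precisely when $N$ is invertible, i.e.\ when $\det N\not\equiv 0\pmod{p_i}$.

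The part I expect to require the most care is not a computational difficulty but the precise reading of the ``if and only if''. The clean equivalence is an assertion about solvability for \emph{all} right-hand sides $c$, equivalently about the guarantee that the vectors $b_i$ of step~1 always exist; for a single $c$ that happens to lie in the column space of $N$ modulo some $p_i$ a solution can persist even when $\det N\equiv 0\pmod{p_i}$. I would therefore phrase the criterion as characterizing universal solvability (the regime in which the stated algorithm is guaranteed to terminate), which is the natural invariant-free statement matching the construction in steps~1 and~2.
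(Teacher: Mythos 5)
Your proof is correct and follows essentially the same route as the paper's: verify the explicit formula modulo each $p_j$ using $s_ik_i\equiv 1\pmod{p_j}$ for $i=j$ and $s_ik_i\equiv 0\pmod{p_j}$ for $i\neq j$, then conclude by the Chinese Remainder Theorem. You in fact go beyond the paper's proof, which silently omits the ``in particular'' claim; your observation that the ``only if'' direction must be read as universal solvability --- since for a particular $c$ lying in the column space of $N$ modulo some $p_i$ a solution can exist even when $\det N\equiv 0\pmod{p_i}$ --- is a correct and worthwhile refinement of the statement.
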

\begin{proof}
Observe that $s_ik_i\equiv 1\mod p_i$ and $s_ik_i\equiv 0\mod p_j$ for all $j\neq i$.  Therefore we have
modulo $p_j$
$$
Na=\sum_{i=1}^\ell Nb_is_ik_i \equiv \sum_{i=1}^\ell (c+q_ip_i) s_ik_i\equiv c
$$
for all $j\in\{1,\ldots,\ell\}$, and the claim follows by the Chinese Remainder theorem.
\end{proof}
For the general case of an arbitrary $k\in \mathbb N$, 
we first observe, that~(\ref{smith0}) can  equivalently be written as
\begin{equation}\label{smith1}
(N\mid k\mathbb I)\bar a=c
\end{equation}
where $M:=(N\mid k\mathbb I)$ is the $n\times 2n$-matrix with $N$ in the left block and $k\mathbb I$
in the right block, and where $\bar a$ is the vector with first $n$ components $a$ and
$n$ additional (unknown) integer components below.
Then~(\ref{smith1}), and hence~(\ref{smith0}), can be solved as follows:
\begin{proposition}
Let $UMV=B$ be the Smith decomposition of $M$, i.e.,  $U\in\mathbb Z^{n\times n}$ and
$V\in\mathbb Z^{2n\times 2n}$ are unimodular matrices, and $B\in\mathbb Z^{n\times 2m}$
is the Smith normal form of $M$. Let $y$ be an integer solution of $By=Uc$.
Then, $\bar a=Vy$ solves~(\ref{smith1}), and in particular, the first $n$ components
of $\bar a$ solve~(\ref{smith0}). Moreover,~(\ref{smith1}) has a solution if
and only if $B_{ii}$ divides the $i$-th component of $Uc$ for all $i$.
\end{proposition}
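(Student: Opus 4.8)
The plan is to reduce the integer system~(\ref{smith1}) to a diagonal one by exploiting the defining property of the Smith decomposition, and then to solve the resulting diagonal system component-wise. The whole argument rests on the fact that $U$ and $V$, being unimodular, are invertible over $\mathbb Z$; in particular the change of variables $y\mapsto Vy$ is a bijection of $\mathbb Z^{2n}$ onto itself, so that passing from $\bar a$ to $y$ never leaves the integers.

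First I would transform the system. Since $U$ is invertible over $\mathbb Z$, the equation $M\bar a=c$ is equivalent to $UM\bar a=Uc$. Substituting $\bar a=Vy$ and using the relation $UMV=B$ gives $UM\bar a=UMVy=By$, so that~(\ref{smith1}) is equivalent to $By=Uc$. This is exactly the assertion that $\bar a=Vy$ solves~(\ref{smith1}) precisely when $y$ solves $By=Uc$, which proves the first claim. The statement about the first $n$ components of $\bar a$ then follows immediately from the equivalence of~(\ref{smith1}) and~(\ref{smith0}) established in the paragraph preceding the proposition.

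Finally I would solve the diagonal system and read off the criterion. The key structural observation here is that $M=(N\mid k\mathbb I)$ has full row rank $n$: over $\mathbb Q$ the right block $k\mathbb I$ alone is invertible (for $k\geq 1$), whence $\operatorname{rank}M=n$, and consequently every invariant factor $B_{11},\ldots,B_{nn}$ is nonzero. Because $B$ is diagonal, the system $By=Uc$ decouples into the scalar equations $B_{ii}y_i=(Uc)_i$ for $i=1,\ldots,n$, with the remaining components $y_{n+1},\ldots,y_{2n}$ left free. Each scalar equation has an integer solution $y_i$ if and only if $B_{ii}\mid(Uc)_i$, which yields the stated solvability condition.

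The argument is essentially routine once the Smith decomposition is available; the only point genuinely requiring care is the full-row-rank observation. This is the main obstacle, in the sense that it guarantees there are no zero rows of $B$ hiding additional consistency conditions, and so ensures that the criterion is exactly $B_{ii}\mid(Uc)_i$ for all $i$ rather than a mixture of divisibility and vanishing conditions.
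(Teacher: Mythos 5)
Your proof is correct and is essentially the paper's intended argument: the paper simply states that the claim ``follows immediately from the properties of the Smith decomposition,'' and your write-up supplies exactly those details (unimodularity of $U$ and $V$ giving the equivalence of $M\bar a=c$ with $By=Uc$, then solving the diagonal system component-wise). Your extra observation that $M=(N\mid k\mathbb I)$ has full row rank, so all invariant factors $B_{ii}$ are nonzero, is a nice touch that makes the divisibility criterion unambiguous without invoking the convention that $0$ divides only $0$.
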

\begin{proof}
The proof follows immediately from the properties of the Smith decomposition.
\end{proof}

%%%%%%%%%%%%%%%%%%%%%%%%%%%%%%%%%%%%%%%%%
%%%%%%%%%%%%%%%%%%%%%%%%%%%%%%%%%%%%%%%%%
%%%%%%%%%%%%%%%%%%%%%%%%%%%%%%%%%%%%%%%%%
\subsection{The asymmetric Lights Out game}
Here we play the game on a directed graph $G$: If a certain button is pressed
the corresponding light toggles and also the lights in all of its out-neighbors.
The adjacency matrix $A\in\mathbb F_2^{n\times n}$ of $G$ is in this case in general not symmetric:
$$A_{jk}=\begin{cases}
1 &\text{if $j$ is the foot and $k$ is the head of a directed edge,}\\
0 &\text{otherwise.}
\end{cases}
$$
We put again $N=A+\mathbb I$. 
If $i\in\mathbb F_2^n$ is an initial state and $f\in\mathbb F_2^n$ a 
final state, then $i$ can be transformed into $f$ if and only if $f=i+N^\top a$ for 
some $a\in\mathbb F_2^n$, i.e., if $i+f$ belongs to the column space of $N^\top$.
 But then, according to Proposition~\ref{fredholm}, $\operatorname{im}N^\top=(\operatorname{ker}N)^\bot$,
 and therefore Theorem~\ref{thm-inv} remains  true for the Lights Out game on the directed graph $G$.

%%%%%%%%%%%%%%%%%%%%%%%%%%%%%%%%%%%%%%%%%
%%%%%%%%%%%%%%%%%%%%%%%%%%%%%%%%%%%%%%%%%
%%%%%%%%%%%%%%%%%%%%%%%%%%%%%%%%%%%%%%%%%
\section{The problem of finding minimal solutions}
The questions we encountered so far could easily be answered by
standard tools of linear algebra: E.g.~to determine whether an initial
state $i$ can be transformed into a final state $f$ for the Lights Out
game is answered by the separating invariant from Theorem~\ref{thm-inv}:
This requires only to compute a matrix multiplication $J(i+f)$ and to
check if the result is the zero vector. Similarly, to actually compute
a solution for this problem, we just apply Gauss elimination 
to solve the linear system $Na=i+f$ for $a$ over $\mathbb F_2$.
However, it turns out that finding minimal solutions is computationally
much more difficult. For example we have:
\begin{proposition}
The problem of finding a self-reproducing vector $x\neq 0$ in Proposition~\ref{self-reproducing} 
of minimal Hamming weight is NP-hard.
\end{proposition}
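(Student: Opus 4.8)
The plan is to exhibit a polynomial-time reduction from the \emph{minimum distance problem} for binary linear codes, i.e.\ the problem of finding a minimum-weight nonzero codeword of a code given by a parity-check matrix, which is a celebrated NP-hard problem (a theorem of Vardy). By Proposition~\ref{self-reproducing} a nonzero vector $x$ is self-reproducing on $G$ precisely when $x\in\operatorname{ker}A$, so a self-reproducing vector of minimal Hamming weight is nothing but a minimum-weight nonzero element of the $\mathbb F_2$-linear code $\operatorname{ker}A$. Hence it suffices to realize an \emph{arbitrary} binary linear code, in a weight-preserving fashion, as the kernel of the adjacency matrix of a simple graph.

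First I would take an instance of the minimum distance problem: a code $C\subseteq\mathbb F_2^n$ presented by a parity-check matrix $H\in\mathbb F_2^{m\times n}$, so that $C=\operatorname{ker}H$, together with a target weight $w$. By Gauss elimination over $\mathbb F_2$ I may assume, at polynomial cost and without changing $C$, that $H$ has full row rank $m$. I then construct the bipartite graph $G$ on $n+m$ vertices whose adjacency matrix is the block matrix
$$A=\begin{pmatrix} 0 & H^\top \\ H & 0 \end{pmatrix}.$$
This $A$ is symmetric, has $0/1$ entries, and has a vanishing diagonal (both diagonal blocks are zero), so it is genuinely the adjacency matrix of a simple graph.

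The key step is the identity $\operatorname{ker}A=\operatorname{ker}H\oplus\operatorname{ker}H^\top$: writing a vector as $(u,v)^\top$, the equation $A(u,v)^\top=0$ decouples into $H^\top v=0$ and $Hu=0$. Because $H$ has full row rank, $H^\top$ is injective, so $\operatorname{ker}H^\top=\{0\}$ and the map $u\mapsto(u,0)^\top$ is a weight-preserving bijection between $C=\operatorname{ker}H$ and $\operatorname{ker}A$. Consequently the minimum Hamming weight of a nonzero self-reproducing vector on $G$ equals the minimum distance of $C$, and the decision version (``is there a nonzero self-reproducing $x$ with $\langle x,x\rangle$-support of size $\le w$?'') is equivalent to the corresponding decision problem for $C$. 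A polynomial-time algorithm producing a minimal self-reproducing vector would therefore solve the minimum distance problem, which establishes NP-hardness.

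The main obstacle, and the whole reason for passing to a full-rank $H$, is the spurious summand $\operatorname{ker}H^\top$ created by the symmetric ``bipartite doubling'': without controlling it, short vectors supported on the second block of coordinates could mask the true minimum distance of $C$ and break the weight correspondence. Imposing full row rank annihilates this summand completely; an equivalent workaround would be to pad $H$ with auxiliary columns so as to force $\operatorname{ker}H^\top=\{0\}$ while leaving the weights on the $C$-block untouched. Everything else — that $A$ is a legitimate simple-graph adjacency matrix and that the entire construction runs in polynomial time — is routine and needs no further elaboration.
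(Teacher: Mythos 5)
Your proof is correct, but it takes a genuinely different route from the paper. The paper's own argument is very short: it observes that a self-reproducing vector is a nonzero element of $\operatorname{ker}A$, translates this into the graph-theoretic \emph{Even Vertex Set} decision problem, and then simply cites Vardy~\cite{vardy} as having shown that this problem is NP-complete. You instead reduce explicitly from Vardy's general minimum distance problem (arbitrary parity-check matrix $H$): you normalize $H$ to full row rank by Gauss elimination, form the bipartite adjacency matrix $\left(\begin{smallmatrix}0 & H^\top\\ H & 0\end{smallmatrix}\right)$, and verify that the kernel decomposes as $\operatorname{ker}H\oplus\operatorname{ker}H^\top$, with the full-rank assumption killing the second summand so that the correspondence between codewords of $C$ and self-reproducing vectors is weight-preserving. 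This buys self-containedness: your argument only needs Vardy's coding-theoretic theorem in its standard form, whereas the paper's citation implicitly requires that the NP-completeness survives the restriction to parity-check matrices that are symmetric with zero diagonal --- which is exactly the nontrivial point your construction settles. Amusingly, your symmetric doubling is the same trick the paper itself uses later, in Lemma~\ref{lem-np}, to prove the Symmetric Nearest Codeword Problem NP-complete; your proof essentially shows that this machinery could also have been invoked to justify the present proposition. The only blemishes are cosmetic: the phrase ``$\langle x,x\rangle$-support'' should just read ``Hamming weight,'' and you could note explicitly that when $\operatorname{ker}H=\{0\}$ both decision problems answer ``no,'' so the equivalence holds in all cases.
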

\begin{proof}
Recall that a self-reproducing vector $x$ is characterized by the equation $Ax=0$,
where $A$ is the adjacency matrix of the graph $G$. Such a vector
$x$ corresponds to a vertex set $V'=\{v_{j_1},\ldots,v_{j_\ell}\}\subset V$
with $v_{j_k}\in V'\iff x_{j_k}=1$ which has the property that every vertex $v\in V$
has an even number of vertices of $V'$ (or none of them) among its neighbors. This translates into
the following decision problem:
\begin{center}
\begin{tabular}{lp{.7\linewidth}}
\multicolumn{2}{l}{\bf Even Vertex Set}\\
{\bf Instance:} & A graph $G=(V,E)$ and an integer $w>0$.\\
{\bf Question:} & Is there a nonempty subset $V'\subset V$ of at most $w$
vertices, such that every vertex $v\in V$ has an even number of 
vertices of $V'$ among its neighbors?
\end{tabular}
\end{center}
It has been shown by Vardy~\cite{vardy} that the Even Vertex Set problem
is NP-complete. To find a vector $x$ with $Ax=0$ and minimal Hamming
weight is therefore NP-hard.
\end{proof}
As a second example, we take a closer look at
the problem of finding a minimal solution to the 
Non-reflexive Lights Out game with $N=A$ (see Section~\ref{non-reflexive}), i.e.,
a solution with a minimum buttons pressed. Suppose our graph has $n$ vertices.
As we have seen above, to transform an initial state $i\in\mathbb F_2^n$
into a final state $f\in\mathbb F_2^n$ we need to solve the linear system
$Aa=i+f$ over $\mathbb F_2$ for $a\in \mathbb F_2^n$. If the matrix rank
of $A$ over $\mathbb F_2$ is $n$, the solution $a$ 
is unique. 
However, in general, $r:=\operatorname{rank}A<n$
and the set of solutions (if non-empty) forms an affine space 
$\mathcal A$ of dimension $n-r>0$ in
$\mathbb F_2^n$. To find a solution with a minimum number of buttons to
push means to find a vector in $\mathcal A$ with minimum Hamming weight.
We will show, that this problem is NP-hard. The reasoning is based upon
the result of Berlekamp, McEliece and van Tilborg who
showed in~\cite{berlekamp} that the nearest codeword problem is NP-complete:
\begin{center}
\begin{tabular}{lp{.7\linewidth}}
\multicolumn{2}{l}{\bf Nearest Codeword Problem}\\
{\bf Instance:} & A binary $m\times n$ matrix $A$, a binary vector $y\in\mathbb F_2^m$, and an integer $w>0$.\\
{\bf Question:} & Is there a vector $x\in\mathbb F_2^n$ of Hamming weight $\le w$ such that $Ax=y$?
\end{tabular}
\end{center}
We first show that the  special case of a square matrix $A$ is not easier than the general problem:
\begin{center}
\begin{tabular}{lp{.7\linewidth}}
\multicolumn{2}{l}{\bf Balanced Nearest Codeword Problem}\\
{\bf Instance:} & A binary $n\times n$ matrix $A$, a binary vector $y\in\mathbb F_2^n$, and an integer $w>0$.\\
{\bf Question:} & Is there a vector $x\in\mathbb F_2^n$ of Hamming weight $\le w$ such that $Ax=y$?
\end{tabular}
\end{center}
\begin{lemma}
The Balanced Nearest Codeword Problem is NP-complete.
\end{lemma}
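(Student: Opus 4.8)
The plan is to establish the two standard requirements for NP-completeness: membership in NP and NP-hardness. Membership is immediate, since any candidate vector $x\in\mathbb F_2^n$ serves as a certificate: one checks in polynomial time that $Ax=y$ and that the number of nonzero entries of $x$ is at most $w$. The substance lies in the hardness reduction, and for this I would reduce directly from the general (rectangular) Nearest Codeword Problem, which is NP-complete by~\cite{berlekamp}. Since the square problem is a special case of the general one, such a reduction is exactly what is needed to show that restricting to square matrices does not make the problem easier.

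Given an instance $(A,y,w)$ of the Nearest Codeword Problem with $A\in\mathbb F_2^{m\times n}$ and $y\in\mathbb F_2^m$, I would construct a square instance by zero-padding. Set $K:=\max(m,n)$, let $A'\in\mathbb F_2^{K\times K}$ be the matrix having $A$ in its top-left $m\times n$ block and zeros elsewhere, and let $y'\in\mathbb F_2^K$ agree with $y$ in its first $m$ entries and be zero afterwards; the threshold $w$ is left unchanged. Thus when $m<n$ one appends $n-m$ zero rows (and pads $y$ with zeros), while when $m>n$ one appends $m-n$ zero columns. This construction is clearly computable in polynomial time, so it remains to verify that $(A,y,w)$ is a yes-instance if and only if $(A',y',w)$ is.

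For the equivalence I would transport solutions along the obvious embedding and projection. If $x$ solves the original instance, extend it to $x'\in\mathbb F_2^K$ by appending $K-n$ zeros: the first $m$ rows of $A'x'$ reproduce $Ax=y$ because the padded columns are zero on those rows, while each appended row reads $0=y'_i=0$, so $A'x'=y'$, and the Hamming weight is unchanged. Conversely, if $x'$ solves the square instance, let $x$ be its first $n$ components: the top $m$ rows of $A'x'=y'$ collapse to $Ax=y$ since the extra columns contribute nothing there, and deleting coordinates can only decrease weight, so $\operatorname{wt}(x)\le\operatorname{wt}(x')\le w$.

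The construction has no deep obstacle; the only point demanding care is the bookkeeping that keeps the weight bound exactly intact. Concretely, one must confirm that the appended rows impose only the trivially satisfiable constraints $0=0$ (which is why $y'$ must be padded with zeros rather than arbitrary bits), and that the appended columns introduce free variables that can be set to zero at no cost, so that they neither obstruct solvability nor allow the weight to be pushed below what the original instance permits. Once this is checked, the reduction shows that the Balanced Nearest Codeword Problem is at least as hard as the general one, and together with the NP-membership observed above this yields NP-completeness.
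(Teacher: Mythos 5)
Your proof is correct and takes essentially the same approach as the paper: both reduce the general Nearest Codeword Problem to the square case by zero-padding (appending zero columns when $m>n$, zero rows and zero entries of $y$ when $m<n$), transporting solutions by zero-extension and truncation. Your unified formulation with $K=\max(m,n)$ is just a compact way of writing the paper's two cases.
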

\begin{proof}
It is easy to see that the Balanced Nearest Codeword Problem is in NP. 
We will now show that the General Nearest Codeword Problem can be
reduced to the Balanced Nearest Codeword Problem: Let $A\in\mathbb F_2^{m\times n}$ and
$y\in\mathbb F_2^m$. Consider first the case $n<m$ and let
$$
\bar A:=(A\mid 0)\in\mathbb F_2^{m\times m}, \quad \bar y:=y.
$$
If  $x$ is a  solution of $Ax=y$ with Hamming weight $w$, then
$$
\bar x=\left(\begin{array}{c}x\\\hline 0\end{array}\right)\in\mathbb F_2^{m}
$$
is a solution of $\bar A\bar x=\bar y$ with Hamming weight $w$. Vice versa,
if $\bar x\in\mathbb F_2^m$ is a solution of $\bar A\bar x=\bar y$, then 
$$
x=\begin{pmatrix}\bar x_1\\\vdots\\\bar x_n\end{pmatrix}\in\mathbb F_2^{n}
$$
is a solution of $Ax=y$ with Hamming weight $\le w$.

If $n>m$ we define similarly
$$
\bar A:=\left(\begin{array}{c} A\\\hline 0\end{array}\right)\in\mathbb F_2^{n\times n}, \quad
\bar y:=\left(\begin{array}{c} y\\\hline 0\end{array}\right)\in\mathbb F_2^{n}. 
$$
If  $x$ is a  solution of $Ax=y$ with Hamming weight $w$, then
$
\bar x=x
$
is a solution of $\bar A\bar x=\bar y$ with Hamming weight $w$ and vice versa.
\end{proof}
%%%%%
Next, we show that the symmetric case is not easier than the general problem:
\begin{center}
\begin{tabular}{lp{.7\linewidth}}
\multicolumn{2}{l}{\bf Symmetric Nearest Codeword Problem}\\
{\bf Instance:} & A symmetric binary $n\times n$ matrix $A$, a binary vector $y\in\mathbb F_2^n$, and an integer $w>0$.\\
{\bf Question:} & Is there a vector $x\in\mathbb F_2^n$ of Hamming weight $\le w$ such that $Ax=y$?
\end{tabular}
\end{center}
\begin{lemma}\label{lem-np}
The Symmetric Nearest Codeword Problem is NP-complete.
\end{lemma}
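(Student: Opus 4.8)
The plan is to establish membership in NP and then to reduce the (already NP-complete) Balanced Nearest Codeword Problem to the Symmetric Nearest Codeword Problem. Membership in NP is immediate: a candidate $x$ serves as a certificate, since verifying $Ax=y$ and that the Hamming weight of $x$ is at most $w$ takes polynomial time. The substance is the reduction, and the natural device is to symmetrize an arbitrary square matrix by placing it in an off-diagonal block.

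Given an instance $(A,y,w)$ of the Balanced Nearest Codeword Problem with $A\in\mathbb F_2^{n\times n}$ and $y\in\mathbb F_2^n$, I would define the symmetric $2n\times 2n$ matrix and target
$$
\bar A:=\begin{pmatrix}0&A\\A^\top&0\end{pmatrix}\in\mathbb F_2^{2n\times 2n},\qquad \bar y:=\begin{pmatrix}y\\0\end{pmatrix}\in\mathbb F_2^{2n},
$$
and keep the same weight bound $w$. Symmetry of $\bar A$ is clear, and the construction is obviously computable in polynomial time. Writing a vector of $\mathbb F_2^{2n}$ as $\bar x=\begin{pmatrix}u\\v\end{pmatrix}$ with $u,v\in\mathbb F_2^n$, the system $\bar A\bar x=\bar y$ decouples into the two independent conditions $Av=y$ and $A^\top u=0$.

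For the forward direction, if $Ax=y$ has a solution $x$ with $\operatorname{wt}(x)\le w$, then $\bar x=\begin{pmatrix}0\\x\end{pmatrix}$ satisfies $\bar A\bar x=\bar y$ and has the same Hamming weight $\operatorname{wt}(x)\le w$. Conversely, any solution $\bar x=\begin{pmatrix}u\\v\end{pmatrix}$ of $\bar A\bar x=\bar y$ with $\operatorname{wt}(\bar x)\le w$ yields $Av=y$ with $\operatorname{wt}(v)\le\operatorname{wt}(u)+\operatorname{wt}(v)=\operatorname{wt}(\bar x)\le w$, so $v$ is a solution of the original system of weight at most $w$. Thus the two decision instances have the same answer, and the reduction is correct.

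The only point requiring care — the place I would expect a careless argument to go wrong — is the backward direction: one must notice that the two blocks of equations are genuinely independent, so that the constraint $A^\top u=0$ (which is always met by $u=0$) can only add weight and never helps to lower the weight of the $v$-part needed to hit $y$. This is precisely why the minimal weight of the symmetric instance equals that of the original, and why the threshold $w$ transfers unchanged. Incidentally, $\bar A$ has zero diagonal, so the reduction even produces a loopless symmetric instance, matching the adjacency-matrix setting of the Lights Out game.
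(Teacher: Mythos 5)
Your proof is correct and follows essentially the same route as the paper: symmetrize via off-diagonal blocks $A$ and $A^\top$, lift a solution $x$ to a block vector of the same weight, and project a block solution back down with no weight increase. The only difference is that your blocks and target are arranged in the transposed order (the paper uses $\bar A=\left(\begin{smallmatrix}0&A^\top\\ A&0\end{smallmatrix}\right)$, $\bar y=\left(\begin{smallmatrix}0\\ y\end{smallmatrix}\right)$), which is immaterial.
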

\begin{proof}
Obviously the Symmetric Nearest Codeword Problem is in NP, and we
have to show that the Balanced Nearest Codeword Problem can be
reduced to the Symmetric Nearest Codeword Problem. To this end, let $A\in\mathbb F_2^{n\times n}$ and
$y\in\mathbb F_2^n$, and consider
$$
\bar A=\left(\begin{array}{c|c}0&A^\top\\\hline A&0\end{array}\right)=\bar A^\top\in\mathbb F_2^{2n\times 2n},\quad 
\bar y=\left(\begin{array}{c}0\\\hline y\end{array}\right)\in\mathbb F_2^{2n}.
$$
If $x$ is a solution of $Ax=y$ with Hamming weight $w$, then
$$
\bar x=\left(\begin{array}{c}x\\\hline 0\end{array}\right)\in\mathbb F_2^{2n}
$$
is a solution of $\bar A\bar x=\bar y$ with the same Hamming weight $w$.
Vice versa, if $\bar x\in\mathbb F_2^{2n}$ is a solution of $\bar A\bar x=\bar y$ with Hamming weight $w$, then
$$
x=\begin{pmatrix}\bar x_1\\\vdots\\\bar x_n\end{pmatrix}\in\mathbb F_2^{n}
$$
is a solution of $Ax=y$ with Hamming weight $\le w$. 
\end{proof}
The corresponding computational problem for the Non-reflexive Lights Out game is the following:
\begin{center}
\begin{tabular}{lp{.7\linewidth}}
\multicolumn{2}{l}{\bf Non-reflexive Lights Out game Problem}\\
{\bf Instance:} & A simple graph $G$ with $n$ vertices, an initial state $i\in\mathbb F_2^n$,
a final state $f\in\mathbb F_2^n$, and an integer $w>0$.\\
{\bf Question:} & Is there a solution $a$ in $\mathbb F_2^n$ of Hamming weight $\le w$ such that $i+Ax=f$?
\end{tabular}
\end{center}
From Lemma~\ref{lem-np} it follows now immediately:
\begin{theorem}
The  Non-reflexive Lights Out game Problem is NP-complete. Hence, to find
a solution of the Non-reflexive Lights Out game Problem with a minimal
number of pressed buttons is NP-hard.
\end{theorem}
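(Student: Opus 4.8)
The plan is to prove both halves of the claim: first that the problem lies in NP, then that it is NP-hard, the latter by a reduction from the Symmetric Nearest Codeword Problem of Lemma~\ref{lem-np}.

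Membership in NP is immediate: a candidate vector $a\in\mathbb F_2^n$ serves as a certificate, and checking both that its Hamming weight is at most $w$ and that $i+Aa=f$ holds amounts to computing a single matrix--vector product over $\mathbb F_2$ together with a weight count, all of which is polynomial in the size of the instance.

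For NP-hardness I would reduce the Symmetric Nearest Codeword Problem to the Non-reflexive Lights Out game Problem with $N=A$. The naive attempt of feeding a symmetric instance matrix directly into the game does not work verbatim, and this is precisely where the one genuine obstacle sits: the Non-reflexive game requires $A$ to be the adjacency matrix of a \emph{simple} graph, so $A$ must be symmetric \emph{and} have zero diagonal, whereas a general symmetric matrix may carry ones on its diagonal. The observation that removes the obstacle is that the matrices produced by the reduction in the proof of Lemma~\ref{lem-np} already meet the zero-diagonal requirement: the matrix $\bar A=\left(\begin{smallmatrix}0&A^\top\\ A&0\end{smallmatrix}\right)$ has zero diagonal blocks, hence zero diagonal, and is symmetric. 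Thus every such $\bar A$ is the adjacency matrix of a simple (bipartite) graph $\bar G$, so Lemma~\ref{lem-np} in fact shows that the Nearest Codeword Problem is already NP-complete when the matrix is restricted to adjacency matrices of simple graphs.

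Given such an instance---a simple graph $\bar G$ with adjacency matrix $\bar A$, a target vector $\bar y$, and a bound $w$---I would finish the reduction by setting the initial state $i:=0$ and the final state $f:=\bar y$. Since over $\mathbb F_2$ the defining condition $i+\bar A a=f$ collapses to $\bar A a=\bar y$, a solution of Hamming weight $\le w$ for the Lights Out instance exists if and only if one exists for the codeword instance; the transformation is plainly computable in polynomial time, so it is a valid reduction and yields NP-hardness. Combined with membership in NP this proves NP-completeness. The minimization statement then follows at once, because any procedure returning a solution of minimum Hamming weight answers the decision question by one comparison of that minimum against $w$, so finding a solution with the fewest pressed buttons is NP-hard.
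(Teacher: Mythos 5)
Your proof is correct and follows the same route as the paper, whose entire proof is the remark that the theorem follows immediately from Lemma~\ref{lem-np}. Your write-up is in fact more careful than the paper's: you make explicit the observation needed to justify that ``immediately,'' namely that the matrix $\bar A=\left(\begin{smallmatrix}0&A^\top\\ A&0\end{smallmatrix}\right)$ constructed in the proof of Lemma~\ref{lem-np} has zero diagonal and is thus the adjacency matrix of a simple (bipartite) graph, so the lemma really does produce instances of the Non-reflexive Lights Out game Problem (with $i=0$, $f=\bar y$).
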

If the rank of $A$ is $r<n$, this indicates that one
has to go over all of the $2^{n-r}$ points in the affine space defined by $Ax=i+f$ to determine a 
minimal solution of the Non-reflexive Lights Out game.

%%%%%%%%%%%%%%%%%%%%%%%%%%%%%%%%%%%%%%%%%
\section{Circuit diagrams of the Lights Out game}\label{circuit}
It is far from obvious how a classical electric circuit with voltage source, lamps and switches
can be built which realizes the Lights Out game. We propose such a circuit diagram for
the cycle graph $\mathcal C_n$ with $n\ge 3$ vertices. To this end, consider the following building
block $\mathcal B_k$:
\begin{center}
\definecolor{aqaqaq}{rgb}{0.6,0.6,0.6}
\begin{tikzpicture}[line width=.5,line cap=round,line join=round,>=triangle 45,x=9,y=9]
\fill[color=aqaqaq,fill=aqaqaq,fill opacity=0.1] (-11,-4) -- (5.5,-4) -- (5.5,15) -- (-11,15) -- cycle;
\draw  (2.,1.)-- (-0.246,0.246);
\draw (2.,5.)-- (-0.246,4.246);
\draw  (2.,1.)-- (4.,1.);
\draw  (2.,9.)-- (-0.246,8.246);
\draw  (0.,13.)-- (2.246,13.754);
\draw  (4.,-2.) circle (1);
\draw  (4.,1.)-- (4.,-1.);
\draw  (3.2928932188134525,-2.7071067811865475)-- (4.707106781186548,-1.2928932188134525);
\draw  (4.707106781186548,-2.7071067811865475)-- (3.2928932188134525,-1.2928932188134525);
\draw  (3.,-5.)-- (5.,-5.);
\draw  (0.,13.)-- (-2.,13.);
\draw  (-2.,13.)-- (-2.,16.);
\draw  (-3.,16.)-- (-1.,16.);
\draw  (0.,6.)-- (0.,8.);
\draw  (0.,10.)-- (-4.,10.);
\draw  (-4.,10.)-- (-4.,14.);
\draw  (0.,4.)-- (-2.,4.);
\draw  (-10.,0.)-- (-10.,5.);
\draw  (-8.,2.)-- (-8.,9.);
\draw  (0.,0.)-- (-10.,0.);
\draw  (0.,2.)-- (-8.,2.);
\draw  (-8.,9.)-- (-12.,9.);
\draw  (0.,8.)-- (-6.,8.);
\draw  (-6.,8.)-- (-6.,12.);
\draw  (-6.,12.)-- (-12.,12.);
\draw  (-4.,14.)-- (-12.,14.);
\draw [dash pattern=on 5 off 5] (1.,14.)-- (1.,0.);
\draw [shift={(-2.,8.)}]  plot[domain=1.5707963267948966:4.71238898038469,variable=\t]({1.*0.4*cos(\t r)+0.*0.4*sin(\t r)},{0.*0.4*cos(\t r)+1.*0.4*sin(\t r)});
\draw  (-2.,4.)-- (-2.,7.6);
\draw  (-2.,8.4)-- (-2.,10.);
\draw  (2.,5.)-- (6.,5.);
\draw  (2.,9.)-- (6.,9.);
\draw  (2.,12.)-- (6.,12.);
\draw  (2.,14.)-- (6.,14.);
\draw  (4.,-3.)-- (4.,-5.);
\draw  (3.8,-5.8)-- (4.2,-5.8);
\draw  (3.4,-5.4)-- (4.6,-5.4);
\draw  (-10.,5.)-- (-12.,5.);
\draw (-12.9,12) node {$c_k$};
\draw (-12.9,14) node {$d_k$};
\draw (7.2,5) node {$A_k$};
\draw (7.2,9) node {$B_k$};
\draw (7.2,12) node {$C_k$};
\draw (7.2,14) node{$D_k$};
\draw[color=black] (2.2,-2.) node {$\ell_k$};
\draw[color=black] (-2.,16.8) node {$5V$};
\draw[color=black] (-12.9,9) node {$b_k$};
\draw[color=black] (-12.9,5) node {$a_k$};
\draw [line width=.8,fill=white,fill opacity=1.0] (0.,0.) circle (0.2);
\draw [line width=.8,fill=white,fill opacity=1.0] (0.,2.) circle (0.2);
\draw [line width=.8,fill=white,fill opacity=1.0] (0.,4.) circle (0.2);
\draw [line width=.8,fill=white,fill opacity=1.0] (0.,6.) circle (0.2);
\draw [line width=.8,fill=white,fill opacity=1.0] (0.,8.) circle (0.2);
\draw [line width=.8,fill=white,fill opacity=1.0] (0.,10.) circle (0.2);
\draw [line width=.8,fill=white,fill opacity=1.0] (2.,14.) circle (0.2);
\draw [line width=.8,fill=white,fill opacity=1.0] (2.,12.) circle (0.2);

\draw [line width=.8,fill=white,fill opacity=1.0] (2.,1.) circle (0.2);
\draw [line width=.8,fill=white,fill opacity=1.0] (0.,13.) circle (0.2);
\draw [line width=.8,fill=white,fill opacity=1.0] (2.,5.) circle (0.2);
\draw [line width=.8,fill=white,fill opacity=1.0] (2.,9.) circle (0.2);

\draw [line width=.8,color=aqaqaq] (-11.,-4.)-- (5.5,-4);
\draw [line width=.8,color=aqaqaq] (5.5,-4)-- (5.5,15.);
\draw [line width=.8,color=aqaqaq] (5.5,15.)-- (-11.,15.);
\draw [line width=.8,color=aqaqaq] (-11.,15.)-- (-11.,-4.);
\end{tikzpicture}
\end{center}
Then, combine blocks $\mathcal B_1,\ldots,\mathcal B_n$ such that $A_k$ is
connected to $a_{k+1}$, $B_k$ to $b_{k+1}$, $C_k$ to $c_{k+1}$, and $D_k$ to  $d_{k+1}$,
for $k=1,\ldots,n-1$, and close the cycle by connecting $A_n$ and $a_1$, $B_n$ and $b_1$, 
$C_n$ and $c_1$, and $D_n$ and $d_1$. Then, the quadrupole changeover switch in
block $\mathcal B_k$ will invert the lamps $\ell_{k},\ell_{k+1}$ and $\ell_{k+2}$ cyclically.
An initial state can be arranged by changing, if necessary, the contacts of the switch next to each lamp.

Using a circuit diagram with boolean gates gives more flexibility. In this
case, the building blocks are T flip-flops:
\begin{center}
\begin{circuitikz}
\draw%
(0,0) node[anchor=center,and port] (myand1) {\hspace*{-3.5mm}\raisebox{-1mm}{and}}
(0,2.68) node[and port] (myand2) {\hspace*{-3.5mm}\raisebox{-1mm}{and}}
(2,.28) node[nor port] (mynor1) {\hspace*{-2mm}\raisebox{-1mm}{nor}}
(2,2.4) node[nor port] (mynor2) {\hspace*{-2mm}\raisebox{-1mm}{nor}}
(myand1.out) -- (mynor1.in 2)
(myand2.out) -- (mynor2.in 1)
(mynor1.out) --++ (1, 0) node [anchor = west] {$\bar Q$}
(mynor2.out) --++ (1, 0) node [anchor = west] {$Q$}
(mynor1.out) --++(0,-1) --++ (-3.54,0) --++ (0,.44) -- (myand1.in 2)
(mynor2.out) --++(0,1)  --++ (-3.54,0) --++ (0,-.44) -- (myand2.in 1)
(myand1.in 1)--(myand2.in 2) node [anchor = center, inner sep=0pt, pos=.5] (E) {\small\textbullet}
(E) --++(-1,0) node [anchor = east] {$C$} --++(1.05,0)

(mynor1.in 1) --++ (0,.4) --++ (1.54,.75) -- (mynor2.out)
(mynor2.in 2) --++ (0,-.4) --++ (1.54,-.75) -- (mynor1.out)
;
\node  at (mynor1.out) {\small\textbullet};
\node  at (mynor2.out) {\small\textbullet};
\end{circuitikz}
\end{center}
Whenever a clock pulse at $C$ arrives, the output state $Q,\bar Q=\neg Q$ toggles.
To build the Lights Out game, we are going to use only the $Q$ output which 
will represent the lamp: $Q=1$ corresponds to light on, $Q=0$
corresponds to light off. To implement the Lights Out game on an arbitrary
graph, we represent every vertex $v_i$ by the following circuit element:
\begin{center}
\begin{circuitikz}\draw
(0,0) node[or port] (myor) {\hspace*{-1.5mm}\raisebox{-1mm}{or}}
(myor) to [twoport] (2.5,0) 
;
\node  at (1.27,0) {T$_i$};
\node [anchor = west] at (2.5,0) {$Q_i$};
\node [anchor = east] at (-3.2,0) {$B_i$};

\draw [line width=.7] (.753,-.15)-- (.903,0)-- (.753,.15);
\draw [line width=.4] (-3.,0)-- (-.93,0);
\draw [line width=.4] (-1.386,-.14)-- (-.94,-.14);
\draw [line width=.4] (-1.386,.14)-- (-.94,.14);

\draw [line width=.4] (-1.386,-.42)-- (-1.03,-.42);
\draw [line width=.4] (-1.386,.42)-- (-1.03,.42);

\draw [line width=.7,fill=white,fill opacity=1.0] (-3.,0.) circle (0.2);
\end{circuitikz}
\end{center}
The output of the or-gate is connected to the clock input
of the T flip-flop T$_i$. 
The button $B_i$ of vertex $v_i$  is then connected to 
\begin{itemize}
\item an input pin of the corresponding or-gate (see figure above), and
\item an input pin of the or-gate of every neighboring vertex of $v_i$.  
\end{itemize}
In this way, pushing the button $B_i$ toggles the outputs $Q_i$ and
the outputs of all neighboring vertices.
%%%%%%%%%%%%%%%%%%%%%%%%%%%%%%%%%%%%%%%%%
%%%%%%%%%%%%%%%%%%%%%%%%%%%%%%%%%%%%%%%%%
\section{Lights Out on graphs with symmetries}
Let  $G=(V,E)$ be a graph with vertex set $V=\{v_1,\ldots,v_n\}$ and edge set $E=\{e_1,\ldots,e_k\}$, 
where each edge $e_\ell$ is the set $\{v_i,v_j\}$ of the two vertices which are incident with it.
An automorphism of $G$ is a bijective map $$\phi:V\to V\text{ such that }
\{\phi(v_i),\phi(v_j)\}\in E\iff \{v_i,v_j\}\in E.$$ 
The bijection $\phi:(v_1,\ldots,v_n)\mapsto
(v_{\sigma (1)},\ldots,v_{\sigma(n)})$
corresponds to a permutation $\sigma$ of the index set $\{i,\ldots,n\}$ which in turn
can be expressed by a permutation matrix $P\in\mathbb F_2^{n\times n}:
(1,\ldots,n)^\top\mapsto P(1,\ldots,n)^\top=(\sigma(1),\ldots,\sigma(n))^\top$ with the property  
that $P^\top AP=A$.

Prominent examples of graphs with a nontrivial automorphism group
are Cayley graphs: 
Let $\mathcal G$ be a finite group and $S\subset \mathcal G$ be a
symmetric subset, i.e., with each $s\in S$, also the inverse $s^{-1}\in S$.
The Cayley graph $\Gamma(\mathcal G,S)$ has as vertices the elements $g\in\mathcal G$
and the edges are $\{\{g,gs\}:g\in \mathcal G,s\in S\}$. 
If we assume in addition, that $S$ does not contain
the unit element of $\mathcal G$, the resulting graph has no loops, and is
therefore suitable for the Lights Out game.
Observe that $\mathcal G$ is a subgroup of the automorphism group
of the Cayley graph $\Gamma(\mathcal G,S)$. In fact, each $h\in\mathcal G$
induces an automorphism of $\Gamma(\mathcal G,S)$ by $\mathcal G\to\mathcal G,
g\mapsto hg$. Indeed, the edge $\{g,gs\}$ is mapped to the edge $\{(hg),(hg)s\}$.

The symmetries of a graph allow to map a special state of the Lights Out game
from Section~\ref{sec-special} into a similar one by applying an automorphism of the graph: 
\begin{proposition}
Let $\phi$ be an automorphism of a graph. Then there holds:
\begin{enumerate}
\item[(a)] If $J$ is an inverting set, then $\phi(J)$ is an inverting set.
\item[(b)] If $J$ is a self-avoiding set, then $\phi(J)$ is a self-avoiding set.
\item[(c)] If $J$ is a self-reproducing set, then $\phi(J)$ is a self-reproducing set.
\item[(d)] If $J$ is a neutral set, then $\phi(J)$ is a neutral set.
\end{enumerate}
\end{proposition}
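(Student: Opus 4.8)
The plan is to translate everything into the language of characteristic vectors and the permutation matrix $P$ attached to $\phi$, and then to exploit the single algebraic fact that $P$ commutes with the adjacency matrix. First I would fix notation: let $x\in\mathbb F_2^n$ be the characteristic vector of the set $J$ (ones exactly at the vertices belonging to $J$), so that each of the four properties is the one recorded in the preceding propositions, expressed through $A$ and $N=A+\mathbb I$ — namely $Nx=(1,\ldots,1)^\top$ for inverting, $Ax=(1,\ldots,1)^\top$ for self-avoiding, $Ax=0$ for self-reproducing, and $Ax=x$ for neutral. The key observation is that applying $\phi$ to $J$ corresponds, at the level of characteristic vectors, to the action $x\mapsto Px$: since $\phi$ merely relabels the vertices by the permutation $\sigma$, the vector $Px$ carries ones exactly at the images of the vertices of $J$, i.e.\ $Px$ is the characteristic vector of $\phi(J)$.

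Next I would extract the one commutation relation that drives all four statements. From the defining property $P^\top AP=A$ of an automorphism, and using that $P$ is a permutation matrix (so $P^\top P=\mathbb I$), I multiply on the left by $P$ to obtain $AP=PA$; consequently $N=A+\mathbb I$ also commutes with $P$, since $NP=AP+P=PA+P=PN$. I would also record the elementary fact $P(1,\ldots,1)^\top=(1,\ldots,1)^\top$, which holds because a permutation matrix only rearranges the all-ones vector into itself.

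With these two facts in hand, each of the four cases is a one-line verification. If $x$ is inverting, then $N(Px)=P(Nx)=P(1,\ldots,1)^\top=(1,\ldots,1)^\top$, so $Px$ is inverting; the self-avoiding and self-reproducing cases follow identically from $A(Px)=P(Ax)$, reading off $P(1,\ldots,1)^\top=(1,\ldots,1)^\top$ and $P\cdot 0=0$ respectively; and in the neutral case $A(Px)=P(Ax)=Px$.

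I do not expect a genuine obstacle here: once the correspondence $\phi(J)\leftrightarrow Px$ is set up and the commutation $AP=PA$ is derived from $P^\top AP=A$, invariance of all four properties is immediate. The only point deserving care — and where I would be most explicit — is the bookkeeping that $Px$ is really the characteristic vector of $\phi(J)$ in the convention fixed for $P$ at the start of the section; since $P$ is a permutation matrix with $P^\top=P^{-1}$ and $A$ is symmetric, the argument is unaffected if one's convention instead yields $P^\top x$, because $AP=PA$ transposes to $AP^\top=P^\top A$.
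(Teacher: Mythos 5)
Your proof is correct and follows essentially the same route as the paper: both reduce each property to its defining equation in terms of $A$ and $N=A+\mathbb I$, and both exploit $P^\top AP=A$ together with the fact that a permutation matrix fixes $(1,\ldots,1)^\top$ — the paper inserts $PP^\top=\mathbb I$ and multiplies by $P^\top$, which is just your commutation relation $AP=PA$ in disguise. Your closing remark on the $Px$ versus $P^\top x$ convention is well placed, since under the paper's convention the characteristic vector of $\phi(J)$ is indeed $P^\top x$, and your transpose argument (or the observation that $\phi^{-1}$ is also an automorphism) covers that case.
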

\begin{proof}
(a) Let $x\in\mathbb F_2^n$ be the vector with ones at the coordinates
which correspond to elements of $J$, and $y=P^\top x$
the vector with ones at the coordinates
which correspond to elements of $\phi(J)$.
$J$ is inverting if and only if $Nx=(1,\ldots,1)^\top$. 
By putting $x = PP^\top x$ we get, after multiplying the previous equation 
from the left by $P^\top$, the identity
\begin{eqnarray*}
(1,\ldots,1)^\top&=&P^\top (1,\ldots,1)^\top=
P^\top Nx=\\&=&P^\top (A+\mathbb I)PP^\top x = 
P^\top (A+\mathbb I)Py=(A+\mathbb I)y=Ny 
\end{eqnarray*}
and hence $\phi(J)$ is inverting.

(b), (c) and (d) are analogous.
\end{proof}

\section*{Acknowledgment}
We would like to thank Jonathan Hungerb\"uhler for his help with 
the circuit diagrams in Section~\ref{circuit}.
We would also like to thank Peter Elbau for helpful discussions.

\end{document}